\numberwithin{equation}{section}
\title{On the continuation of locally operator monotone functions}
\author{
J. E. Pascoe
}
\def\rtwo{\mathbb{R}^2}
\def\by-half{b_Y^{-\frac{1}{2}}}
\def\rplus{\mathbb{R}^+}
\def\rtwoplus{{(\mathbb{R}^+)}^2}
\def\rnplus{{(\mathbb{R}^+)}^n}
\def\be{\begin{equation}}
\def\ee{\end{equation}}
\def\mx{\text{max}}
\def\be{\begin{equation}}
\def\ee{\end{equation}}
\def\ni#1{\|#1\|_\infty}
\def\L{\mathcal{L}}
\def\mxthm{\text{\emph{max}}}
\def\rn{\mathbb{R}^n}
\def\cn{\mathbb{C}^n}
\def\p{\mathcal{P}}
\def\res{{(A-z_Y)}^{-1}}
\def\Dx{\delta}
\def\x{\vec{x}}
\def\z{\vec{z}}
\def\Qh{\hat{Q}^n_{p}(\Dx;m_1,m_2)}
\def\Qhtwo{\hat{Q}^2_{p}(\Dx;m_1,m_2)}
\def\Q{Q^n_{p}(\Dx;m_1,m_2)}
\def\W{W^n_{p}(\Dx;m_1,m_2)}
\def\q#1{q^n_{#1}(\Dx;m_1,m_2)}
\def\qtwo#1{q^2_{#1}(\Dx;m_1,m_2)}
\def\McCarthy{M\raise.45ex\hbox{c}Carthy }
\def\McCarthyc{M\raise.45ex\hbox{c}Carthy, }
\begin{document}

\bibliographystyle{plain}
\subjclass[2010]{Primary 47A63 ; Secondary 32A40 }

\newtheorem{defin}[equation]{Definition}
\newtheorem{lem}[equation]{Lemma}
\newtheorem{prop}[equation]{Proposition}
\newtheorem{thm}[equation]{Theorem}
\newtheorem{claim}[equation]{Claim}
\newtheorem{cor}[equation]{Corollary}
\newtheorem{ques}[equation]{Question}
\begin{abstract}
	We generalize the phenomenon of continuation from complex analysis to locally operator monotone functions.
	Along the lines of the egde-of-the-wedge theorem,
	we prove continuations exist dependent only on geometric features of the domain and, namely, independent of the function values.
	We prove a generalization of the Julia inequality for a class of functions containing locally operator monotone functions, Pick functions.
\end{abstract}
\maketitle

\section{Introduction}
	\subsection{Overview}
		A real-valued function of one variable, $f,$ defined on an open interval $I\subseteq \mathbb{R}$ is said to be \emph{operator monotone} if
		for any two self adjoint operators $A$ and $B$ whose spectra are completely contained in $I,$
		$$A\leq B \Rightarrow f(A) \leq f(B).$$
		It was shown in \cite{lo34}  by L\"owner in 1934 that if $f$ is operator monotone, then $f$ extends to an analytic function that maps the upper half plane to itself.
		In \cite{amy11c}, Agler, \McCarthy and Young extended this theorem to several variables. In fact, they generalized L\"owner's theorem to a possibly larger class,
		the \emph{locally operator monotone functions.} That is, if $f$ which takes an open set $U\subset \mathbb{R}^n$ into $\mathbb{R}$ is locally operator monotone, (that is,
		$$S'(t)\geq 0 \Rightarrow \frac{d}{dt}f(S(t))\geq 0$$
		for every $C^1$ parametrized curve $S(t)$ in the set of commuting $n$-tuples of operators with $\sigma(S(t))\subset U$)
		then $f$ extends to an analytic map on a neighborhood of $U$ in $\mathbb{C}^n$ with certain special properties similar to those in L\"owner's theorem.

		The implication of analyticity involved in these results is peculiar, and, as we will show, powerful.
		Namely, for our purposes, it will imply that locally operator monotone functions enjoy the
		phenomenon of analytic continuation.

		Inspired by the edge-of-the-wedge theorem\cite{rudeow},
		our main result, a \emph{wedge-of-the-edge theorem}, endeavors to describe the set on which a locally operator monotone function continues soley in terms
		of its domain. We show that, for a given open set $U\subset \mathbb{R}^n,$ there is a well-described set $\hat{U}\supseteq U$
		such that every locally operator monotone function on $U$ extends to $\hat{U}.$
		For example, our theory implies that if $U = \{(x,y)\in\rtwo | xy >  -1\},$ then any locally operator monotone function on $U$ extends to all of $\rtwo.$
		In general, however, the size of the continuation can not be guaranteed to be so large, and is described by subtle formulas that we call \emph{regulators.}

		We use a recently developed tool in the calculus of Pick Functions at infinity in two variables
		(based of the classical work of R. Nevanlinna in \cite{nev22}) descibed in a paper of Agler, Tully-Doyle and Young \cite{aty11} and
		a paper of Agler and \McCarthy \cite{am11}.
		
		We develop a modification of the Julia inequality in two or more variables for the polydisk,
		which was studied in Abate \cite{ab98}, \cite{ab05}, Agler, \McCarthyc Young \cite{amy11a}, and Jafari \cite{jaf93}, to develop a theory of
		bounds on the directional derivatives of locally operator monotone functions in terms of the domain. This is the content of
		our \emph{local Julia inequalities.} These give some quantative insight into the behavior of locally operator monotone functions.
		This theory, by itself, will allow us to conclude that all entire locally operator monotone functions are linear.
	
	\subsection{Main results}
		We define locally operator monotone functions.
		\begin{defin}
			Let $U \subseteq \mathbb{R}^n$ be open.
			Let $f:U\rightarrow \mathbb{R}$ be a $C^1$.
			The function $f$ is said to be \emph{locally operator monotone} on $U$ if
				$$S'(t)\geq 0 \Rightarrow \frac{d}{dt}f(S(t))\geq 0$$
			for every $C^1$ parametrized curve $S(t)$ in the set of commuting $n$-tuples of operators with $\sigma(S(t))\subset U.$
		
			We denote the class of all locally operator monotone functions on $U$ as $L(U).$
			For a general set $E,$ we set $L(E) = L(\text{\emph{int} }E).$
		\end{defin}
	
		The following proposition establishes the theory of continuation for locally operator monotone functions.
		\begin{prop}\label{maxcontexist}
			Let $U\subset \mathbb{R}^n$ be open and nonempty. For any locally operator monotone function $f$ there is a unique maximal
			$\hat{U}$ and $F \in L(\hat{U})$ such that $F|_U = f.$
		\end{prop}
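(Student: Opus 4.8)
The plan is to reduce the statement to the analytic continuation of the complexifications furnished by L\"owner's theorem and its several-variable generalization, to observe that membership in $L$ is a local condition, and then to run a Zorn's lemma argument whose only non-formal ingredient is the single-valuedness of the continuation.

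First I would record the inputs. By L\"owner \cite{lo34} and Agler--\McCarthy--Young \cite{amy11c}, every $g\in L(V)$ (for $V\subseteq\rn$ open) extends to an analytic function $\tilde g$ on some open neighborhood of $V$ in $\cn$ lying in the associated L\"owner class, and this analytic extension is unique because $\rn$ is a set of uniqueness for analytic functions. I would also record that $L$ is \emph{local}: if $\{V_i\}$ is an open cover of an open $V\subseteq\rn$ and $g|_{V_i}\in L(V_i)$ for every $i$, then $g\in L(V)$. By \cite{amy11c} this reduces to the fact that membership in the L\"owner class on a complex domain is a pointwise positivity condition, hence manifestly local, whose local solutions glue across real overlaps by the identity theorem; alternatively it can be checked directly by localizing the divided-difference (Daleckii--Krein) formula for $\tfrac{d}{dt}f(S(t))$ with spectral projections of $S(t_0)$.

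Next I would set up the poset $\mathcal P$ of all pairs $(V,g)$ with $V\supseteq U$ open, $g\in L(V)$, and $g|_U=f$, ordered by $(V_1,g_1)\le(V_2,g_2)$ iff $V_1\subseteq V_2$ and $g_2|_{V_1}=g_1$. Chains have upper bounds: for a chain $\{(V_\alpha,g_\alpha)\}$ put $V=\bigcup_\alpha V_\alpha$ and $g=\bigcup_\alpha g_\alpha$; given a $C^1$ curve $S(t)$ of commuting tuples with $\sigma(S(t))\subset V$ and $S'(t)\ge0$, the set $\bigcup_{t\in[a,b]}\sigma(S(t))$ is compact by upper semicontinuity of the joint spectrum, so being covered by an increasing family of open sets it lies in a single $V_{\alpha_0}$; hence $\tfrac{d}{dt}g(S(t))=\tfrac{d}{dt}g_{\alpha_0}(S(t))\ge0$ on $[a,b]$, and $g\in L(V)$. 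So $\mathcal P$ has maximal elements by Zorn's lemma, and $\mathcal P$ is nonempty since $(U,f)\in\mathcal P$.

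The crux — and the step I expect to be the main obstacle — is showing $\mathcal P$ is directed, equivalently that any two extensions $(V_1,g_1),(V_2,g_2)$ agree on $V_1\cap V_2$; granting this, their common value extends $f$ to $V_1\cup V_2$ and lies in $L(V_1\cup V_2)$ by locality, so $\mathcal P$ is directed, and a directed poset with a maximal element has a maximum, which is then the unique maximal $(\hat U,F)$ with $\hat U=\bigcup_{(V,g)\in\mathcal P}V$. On components of $V_1\cap V_2$ meeting $U$ the agreement is immediate from the identity theorem applied to $\tilde g_1,\tilde g_2$ (both equal $\tilde f$ near $U$). What is not formal is ruling out genuine monodromy on the remaining components, since ordinary analytic functions do branch: single-valuedness must come from the rigidity of the class. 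Here I would invoke the Nevanlinna-type integral representation in one variable \cite{nev22} and the Pick-function-at-infinity realizations of \cite{aty11} and \cite{am11} in several variables: the data of such a representation is recovered from the germ of the function on any nonempty open piece of its real domain, so an element of $L$ has at most one continuation, and no conflict can arise even when $U$ or $V_1\cap V_2$ is disconnected. This rigidity step is the heart of the argument; everything else is the sheaf-theoretic/Zorn bookkeeping above.
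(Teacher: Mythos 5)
Your overall skeleton (complexify, then argue that any two extensions of $f$ must agree, then take a maximal union) is the right shape, but the two load-bearing steps are not actually established, and one of them is false as stated. The heart of your argument --- single-valuedness, i.e.\ that $g_1\in L(V_1)$ and $g_2\in L(V_2)$ with $g_1|_U=g_2|_U=f$ agree on components of $V_1\cap V_2$ not meeting $U$ --- is only asserted: the claim that ``the representation data is recovered from the germ on any nonempty open real piece'' is itself the uniqueness statement you need, and the tools you cite do not directly apply (the Agler--Tully-Doyle--Young Type I realization of \cite{aty11} is a two-variable statement requiring the growth hypothesis that $\lim_{s\to\infty}s\,h(is,is)$ be finite, which a general element of $L(V)$ in $n$ variables need not satisfy). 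The paper's mechanism is simpler and makes the monodromy worry vacuous: by Theorem \ref{amy11c} together with the uniqueness proposition proved just before Proposition \ref{maxcontexist} (Schwarz reflection plus the edge-of-the-wedge theorem), $f$ determines one analytic function on the \emph{connected} set $\Pi^n$, and any continuous extension of $f$ to a real open set $V$ is forced to equal the boundary limit of that single function along $x+i\epsilon(1,\dots,1)$. Hence any two extensions agree on their overlap automatically, there is no path-continuation in $\mathbb{R}^n$ and no Zorn or gluing lemma is needed: $\hat U$ is simply the union of all open $V\subseteq\mathbb{R}^n$ to which $f$ extends continuously. Your proof becomes correct once the representation-rigidity gesture is replaced by this boundary-value/identity-theorem argument.

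Separately, your ``locality'' lemma for $L$ is false. Take $n=1$, $V_1=(0,1)$, $V_2=(2,3)$, and $g\equiv 0$ on $V_1$, $g\equiv 1$ on $V_2$. Then $g|_{V_i}\in L(V_i)$ for each $i$, but $g\notin L(V_1\cup V_2)$: by Theorem \ref{amy11c} membership would force $g$ to be the restriction of a Pick function real-valued on both intervals, hence analytic across them by reflection, and constancy on $(0,1)$ would force it to be identically $0$, contradicting the value on $(2,3)$; equivalently, the L\"owner divided-difference matrix for a spectrum with one point in each interval has zero diagonal and positive off-diagonal entry, so it is not positive semidefinite and a suitable noncommuting monotone perturbation violates the defining condition. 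The condition of local operator monotonicity couples spectra lying in different members of a cover, and the L\"owner-class kernel decomposition is a global condition on $\Pi^n$, not a pointwise one on the real set. In your directedness step the glued function \emph{is} in $L(V_1\cup V_2)$, but only because it is the restriction of a single L\"owner-class function on $\Pi^n$ --- that is, only after the rigidity step above --- not by any general locality of $L$. (Your chain argument via compactness of $\bigcup_t\sigma(S(t))$ is fine, because there the cover is increasing.)
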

		That is, each locally operator monotone function has a unique maximal continuation.
		We prove this in Section \ref{preliminaries}.
		Computing the $\hat{U}$ and $F$ for a certain $f$ in detail depends on the function $f$ itself. However, this expostion will not attempt to create a machinery
		for individual functions. Instead, in the vein of the edge-of-the-wedge theorem, we give conditions dependent only on the domain $U$ that determine the domain
		of extension $\hat{U}.$ That is, we examine the problem of when $L(U)=L(\hat{U}).$
	
		A version edge-of-the-wedge theorem gives that all of the functions that are analytic on some cone in $\mathbb{C}^n$ and on the
		negative of that cone and are continuous on some relatively open set $S$ in their shared boundary must analytically continue some fixed to a neighborhood of $S.$
		For a detailed account, see Rudin's book dedicated to the theorem, \cite{rudeow}.
		The following definition gives the description of the sets on which we intend to analytically continue in our wedge-of-the-edge theorem. 
		\begin{defin}[Regulators] 
			Let $S\subset \mathbb{R}^n.$
			\begin{enumerate}
				\item At a point $p\in \rn$ denote 
					$$d^S(p)[\x]= \text{\emph{min}}(\sup\{s|p+s\x\in S\},\sup\{s|a-s\x\in S\}).$$
				\item At each point $p\in S$ define the \emph{local regulator}
					$$q^S_{k}(p)[z] = \sup\{|q(z)||q\in\text{\emph{Proj}}(\mathbb{R}[\x]),$$
					$$ \text{\emph{deg }}q=k, q(d^S(p)[\x]) \leq d(a)[\x],
					\forall \x\in \rnplus \}.$$
				\item The \emph{locally regulated set} at $a$ is
					$$\hat{Q}^S(p) = \text{\emph{int }}\{p+z\in\cn|\limsup_{k\rightarrow\infty} \sqrt[k]{q^S_{k}(p)[z]} < 1\}.$$
				\item The \emph{real locally regulated set} for $S$ at $p$ is
					$$Q^S(p) = \hat{Q}^S(p) \cap \rn.$$
				\item The \emph{real regulated set} for $S$ is the minimal set $Q^S \supset S$ such that
					for every $p$ in the interior of $Q^S$
					$$Q^S \supset Q^{Q^S}(p).$$
			\end{enumerate}
		\end{defin}
		
		Now, with this language, we can state our main result.	It describes continuations in terms of the above regulators.
		\begin{thm}[A wedge-of-the-edge theorem]\label{mainwoe}
			Let $U\subset \rn$ be open.
				$$L(U)=L(Q^U)$$
		\end{thm}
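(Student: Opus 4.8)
The plan is to prove the two inclusions separately, with $L(Q^U)\subseteq L(U)$ the routine one. Since $U$ is open and $U\subseteq Q^U$, we have $U\subseteq\operatorname{int}Q^U$, and any $C^1$ curve of commuting $n$-tuples with spectrum in $U$ is a fortiori such a curve with spectrum in $\operatorname{int}Q^U$; hence restriction $F\mapsto F|_U$ maps $L(Q^U)=L(\operatorname{int}Q^U)$ into $L(U)$, and it is injective because the analytic continuations supplied by \cite{amy11c} are determined by their values on the open set $U$ (working component by component on $\operatorname{int}Q^U$). So the content of the theorem is that this restriction map is \emph{onto}: every $f\in L(U)$ extends to some $F\in L(\operatorname{int}Q^U)$. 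By Proposition~\ref{maxcontexist}, $f$ has a unique maximal continuation $F\in L(\hat U)$, and we may take $\hat U$ open; it therefore suffices to prove $Q^U\subseteq\hat U$, for then $F|_{\operatorname{int}Q^U}$ does the job.

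I would deduce $Q^U\subseteq\hat U$ from a single \emph{one-point extension lemma}: for every open $V\subseteq\rn$, every $f\in L(V)$, and every $p\in V$, the function $f$ extends to an element of $L\!\left(V\cup Q^V(p)\right)$. Granting the lemma, apply it to $F\in L(\hat U)$: for each $p\in\hat U$ it produces an extension of $F$ lying in $L\!\left(\hat U\cup Q^{\hat U}(p)\right)$, which by the maximality in Proposition~\ref{maxcontexist} forces $Q^{\hat U}(p)\subseteq\hat U$. Thus $\hat U$ belongs to the family of sets $T\supseteq U$ with $Q^{T}(p)\subseteq T$ for every $p\in\operatorname{int}T$, of which $Q^U$ is by definition the least element; hence $Q^U\subseteq\hat U$. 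This packaging of $Q^U$ as a least fixed point --- together with the real-versus-complex passage below --- is soft bookkeeping once the lemma is in hand.

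The one-point extension lemma is the heart of the matter, and it is where the \emph{local Julia inequalities} enter. Given $f\in L(V)$ and $p\in V$, Löwner's theorem in several variables (\cite{amy11c}) extends $f$ analytically to a neighborhood of $V$ in $\cn$ and places it in the several-variable Pick class, with the attendant Nevanlinna-type realization. The Taylor series of $f$ about $p$ then converges on a region of Reinhardt type about $p$, and the claim is that this region contains $\hat Q^V(p)$. Unwinding the definition of $\hat Q^V(p)$, this reduces to: for each $k$, the $k$-homogeneous part $q_k$ of the Taylor expansion of $f$ at $p$ --- an element of $\operatorname{Proj}(\mathbb{R}[\x])$ of degree $k$ --- satisfies the domination $q_k(d^V(p)[\x])\le d^V(p)[\x]$ for all $\x\in\rnplus$ that defines the local regulator $q^V_k(p)$; for then $\bigl|\sum_{|\alpha|=k}\partial^\alpha f(p)\,(z-p)^\alpha/\alpha!\bigr|\le q^V_k(p)[z-p]$ and the root test hidden in the $\limsup$ defining $\hat Q^V$ yields convergence --- hence analytic continuation --- on $\hat Q^V(p)$. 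The inequality $q_k(d^V(p)[\x])\le d^V(p)[\x]$ is precisely a Julia-type bound: it controls the $k$-th order derivative data of $f$ at $p$ by the boundary distances $d^V(p)[\cdot]$ of $V$, and I would prove it by combining the polydisc Julia inequality (Abate \cite{ab98}, \cite{ab05}, Agler, \McCarthyc Young \cite{amy11a}, and Jafari \cite{jaf93}) with the two-variable calculus of Pick functions at infinity of \cite{aty11}, \cite{am11}, applied to the Nevanlinna realization of $f$. Finally, since $f$ is real on $V\cap\rn$, Schwarz reflection $\overline{f(\bar z)}=f(z)$ makes the continuation real on $\hat Q^V(p)\cap\rn=Q^V(p)$; and it stays in $L$ there because the continued function inherits a Nevanlinna realization on $\hat Q^V(p)$ --- the constraint defining $q^V_k(p)$ is designed exactly so that the terms of that realization remain of the right (Pick) type on $\hat Q^V(p)$ --- whence the characterization of \cite{amy11c} puts it in $L(Q^V(p))$.

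The main obstacle is the derivative estimate of the previous paragraph. One must extract from the Pick/Nevanlinna structure of $f$ not a crude bound on its directional derivatives at $p$ but the sharp, polynomial-of-degree-$k$ bound encoded in $q^V_k(p)$, and one must do this uniformly over all $f\in L(V)$, so that the continuation domain depends only on the geometry of $V$. This uniformity --- the phenomenon that the continuation is independent of the function --- is the ``edge-of-the-wedge'' content of the result, and it is precisely what the local Julia inequalities are built to deliver; everything else (Proposition~\ref{maxcontexist}, the identity theorem, Schwarz reflection, and the least-fixed-point iteration assembling $Q^U$) is comparatively formal.
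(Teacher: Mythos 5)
Your proposal is correct and takes essentially the same route as the paper: a one-point extension lemma (the paper's Lemma~\ref{wedgelem}) proved by bounding Taylor coefficients via Theorem~\ref{julianhood} and applying the root test to get convergence on $\hat Q^V(p)$, followed by a least-fixed-point argument using the minimality of $Q^U$. The only cosmetic difference is that you run the fixed-point bookkeeping on the function-dependent maximal domain $\hat U$ of Proposition~\ref{maxcontexist}, whereas the paper phrases it in the Pick class and transfers back via Theorem~\ref{amy11c} at the end.
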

		
		The regulators may seem a bit mysterious, but they do have some qualitative properties.
		Note, for example, that $q^S_{k}(p)[z]$ depends only on the size of the domain in the totally positive directions and totally negative directions and becomes smaller
		for larger sets. Thus, the size of $Q^S(p)$ depends only on the size of $S$ in those directions with respect to $p$ and and will become bigger when $S$ is extends further
		in those directions. This then translates into similar properties for how $Q^S$ relates to $S.$ 
		
		In terms of continuation, this qualifies that the size of the domain of the continuation of
		a locally operator monotone function is determined by the size of its domain in the totally positive and negative directions.
		An extreme example of this is the example given in the overview: If $U = \{(x,y)\in\rtwo | xy >  -1\},$ then any locally operator monotone function on $U$ extends to all of $\rtwo.$
		
		On the other hand, the effect of the indefinite directions, those with both positive and negative components, seem to have little or no effect.
		This can be seen in simple functions such as 
			$$\frac{x}{1-xy}$$
		which is now known to be locally operator monotone via the main result in \cite{amy11c},
		which is stated in our preliminary section as Theorem \ref{amy11c}. This function is singular on the hyperbola $xy = 1,$
		and cannot be extended there.
		
		In Section \ref{regest}, we show that the regulators are nontrival and give some of their basic geometry.
		Specifically, we give estimates on their sizes for certain wedge-shaped sets.
		
		In Section \ref{juliasection}, we generalize the Julia inequality to obtain our local Julia inequalities.
		This section essentially gives bounds on the derivatives of locally operator monotone functions
		in terms of the geometry of a domain. We use this to show that a series representation of a locally operator
		monotone converges somewhere, and, thus, that the function must continue there in Section \ref{woesection}.
		
		For certain domains the theory of local Julia inequalities makes the computation of $L(U)$ tractible. In fact, we give a complete description of $L(\mathbb{R}^n).$
		This can be found in one variable in \cite{bh97}.
		\begin{prop}\label{liouville}
			All entire locally operator monotone functions are linear. That is,
				$$L(\mathbb{R}^n) = \{f| f(w) = \x\cdot w, \x \in (\mathbb{R}^{\geq 0})^n\}.$$
		\end{prop}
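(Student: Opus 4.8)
The plan is to show that an entire locally operator monotone function has identically vanishing Hessian---so it is affine---and that local operator monotonicity then forces its gradient to lie in $(\mathbb{R}^{\geq 0})^n$. Let $f \in L(\rn)$. By Theorem~\ref{amy11c}, $f$ extends to an analytic function on a neighborhood of $\rn$ in $\cn$; in particular $f$ is real analytic, hence smooth, on $\rn$, and the local Julia inequalities of Section~\ref{juliasection} apply to $f$ at every point. For $p \in \rn$ and $\x \in \rn$ write $D^2_\x f(p)$ for the second directional derivative $\tfrac{d^2}{dt^2}f(p+t\x)$ evaluated at $t=0$.

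The key step is to apply the local Julia inequality at an arbitrary point $p \in \rn$ in an arbitrary totally positive direction $\x \in \rnplus$. The domain width $d^{\rn}(p)[\x]$ that enters that inequality equals $+\infty$, since $p \pm s\x \in \rn$ for every $s \geq 0$. Because the local Julia inequality bounds $D^2_\x f(p)$ (indeed all directional derivatives of $f$ at $p$ of order at least two in the direction $\x$) by a quantity that is controlled by first-order data at $p$ and that tends to $0$ as $d^{\rn}(p)[\x] \to \infty$, this bound degenerates to $D^2_\x f(p) \leq 0$. Should the resulting inequality be only one-sided, the reverse inequality is recovered from the reflection $g(w) := -f(-w)$: a short computation (precompose with $w \mapsto -w$ and reverse the parameter of a defining curve) shows $g \in L(\rn)$, and since $D^2_\x g(q) = -D^2_\x f(-q)$, applying the same bound to $g$ at $-p$ yields $D^2_\x f(p) \geq 0$. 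Hence $D^2_\x f(p) = 0$ for every $p \in \rn$ and every $\x \in \rnplus$.

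To finish, fix $p$. The map $\x \mapsto D^2_\x f(p)$ equals the quadratic form of the Hessian of $f$ at $p$ evaluated on $\x$, so it is a homogeneous quadratic polynomial in $\x \in \rn$ which vanishes on the nonempty open cone $\rnplus$; a polynomial vanishing on a nonempty open set vanishes identically, so the Hessian of $f$ at $p$ is $0$. As $p$ was arbitrary, $f$ is affine, $f(w) = \x_0 \cdot w + c$ with $\x_0 \in \rn$, $c \in \mathbb{R}$. For each $i$, applying the definition of local operator monotonicity to the curve $S(t) = (N_1,\dots,N_{i-1}, N_i + tI, N_{i+1},\dots,N_n)$ built from a fixed commuting $n$-tuple $N$ gives $S'(t) \geq 0$ and $\tfrac{d}{dt}f(S(t)) = (\x_0)_i\,I \geq 0$, so $(\x_0)_i \geq 0$. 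Thus $\x_0 \in (\mathbb{R}^{\geq 0})^n$, and---since local operator monotonicity is insensitive to additive constants, i.e.\ after normalizing $f(0) = 0$---$f(w) = \x_0 \cdot w$, as claimed.

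The crux I expect is the second paragraph: extracting the exact vanishing of $D^2_\x f(p)$ from the local Julia inequality as the domain width tends to infinity. This hinges on the precise quantitative shape of the local Julia inequality of Section~\ref{juliasection}---in particular that its right-hand side genuinely tends to $0$ as $d^U(p)[\x] \to \infty$---together, if that inequality is one-sided, with the reflection $f \mapsto -f(-\cdot)$ to make it two-sided. The remaining steps (polarizing away the rest of the Hessian, and reading the sign of the gradient off monotonicity along coordinate directions) are routine.
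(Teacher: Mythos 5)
Your proposal is correct and follows essentially the same route as the paper: apply the local Julia inequality of Section~\ref{juliasection} (Theorem~\ref{julialine}) along arbitrarily long line segments, use the scaling in the segment half-length to see that the bound on higher-order directional derivatives tends to $0$, and conclude the function is affine. The paper does this for all orders $k\geq 2$ at the single point $0$, while you do it for the Hessian at every point $p$---an equivalent variant---and you additionally spell out the gradient-sign check and the $f(0)=0$ normalization that the paper's terse proof leaves implicit; note also that the reflection hedge in your second paragraph is unnecessary, since Theorem~\ref{julialine} already gives a two-sided bound $|h^{(k)}(p)[\x]|\leq k!\ni{\x}h'(p)[1]$.
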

		
		%

\section{Preliminaries} \label{preliminaries}
	Throughout this expostion we will denote the upper half plane as
		$$\Pi = \{z| \text{Im }z>0\}.$$

	\begin{defin}
		Let $E \subseteq \mathbb{R}^n.$
		The Pick class at $E$, $\mathcal{P}(E),$ is the set of all functions $f:\Pi^n \cup E\rightarrow \overline{\Pi}$ such that $f$ is analytic on $\Pi^n$ and continuous on
		$\Pi^n \cup E.$
	\end{defin}
	We define a subclass of Pick functions related to the locally operator monotone functions, the L\"owner class as in \cite{amy11c}.
	\begin{defin}
		Let $E \subseteq \mathbb{R}^n.$
		The L\"owner class at $E$, $\mathcal{L}(E),$ is the set of all functions $f\in \p(E)$ such that there exist d positive semidefinite
		kernel fuctions $A^i$ such that
			$f(z)-\overline{f(w)}= \sum^n_{i=1} (z_i - w_i)A^i(z,w).$
	\end{defin}
	This class differs slightly from the Pick class, however the are the same if $n=1, 2.$
	We only use that the fact that $\L(E)\subset\p(E).$ The inclusion of the L\"owner class in the Pick class conformally mirrors that between the
	\emph{Schur} and \emph{Schur-Agler} classes in the study of polydisks, and determines the applicability of Hilbert space methods.\cite{ag90}
	
	The following aforementioned theorem was proven in \cite{amy11c}. We present it in our language.
	\begin{thm}[Agler, \McCarthyc Young] \label{amy11c}
		Suppose $U\subseteq \mathbb{R}^n$ is an open set.
		A function $f$ is in the class $L(U)$ if and only if there is a function $F$ such that $F|_U = f$ and $F$ is in the class $\L(U).$
	\end{thm}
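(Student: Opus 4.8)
The statement is the main theorem of \cite{amy11c} transcribed into the notation used here, so the plan is first to fix the dictionary between the two settings and then to sketch the two implications. The dictionary is immediate: our definition of a locally operator monotone function on $U$ is word for word the one used by Agler, \McCarthy and Young, and our L\"owner class $\L(U)$ is theirs, namely the set of $f\in\p(U)$ for which there are positive semidefinite kernels $A^1,\dots,A^n$ on $\Pi^n\cup U$ with $f(z)-\overline{f(w)}=\sum_{i=1}^{n}(z_i-\overline{w_i})A^i(z,w)$. Granting this, the theorem reduces to the equivalence between membership in $L(U)$ and the existence of a L\"owner extension, and I would argue the two directions separately.

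For ``L\"owner extension $\Rightarrow$ locally operator monotone'': suppose $F\in\L(U)$ with $F|_U=f$. Evaluating the kernel identity at real points and using that $f$ is real on $U$ gives $f(x)-f(y)=\sum_{i=1}^{n}(x_i-y_i)A^i(x,y)$ for $x,y\in U$, with the $A^i$ positive semidefinite; taking them continuous (e.g.\ from a minimal realization) one reads off $\partial_i f(x)=A^i(x,x)$, so $f$ is $C^1$. Factor $A^i(x,y)=\langle u_i(x),u_i(y)\rangle$ with $u_i$ continuous on $U$. Given a $C^1$ curve $t\mapsto S(t)$ of commuting self-adjoint $n$-tuples with $\sigma(S(t))\subset U$ and $S'(t)\geq 0$, the chain rule together with the Daleckii--Krein divided-difference calculus expresses $\tfrac{d}{dt}f(S(t))$ as a sum $\sum_{i=1}^{n}R_i^*\,S_i'(t)\,R_i$, where $R_i$ is built from $u_i$ by the functional calculus of $S(t)$ (licit since $\sigma(S(t))\subset U$) and the $i$-th term depends on $f$ only through the $i$-th partial divided difference, which equals $A^i$. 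Each summand is positive semidefinite because $S_i'(t)\geq 0$, so $\tfrac{d}{dt}f(S(t))\geq 0$ and $f\in L(U)$.

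For the converse ``locally operator monotone $\Rightarrow$ L\"owner extension'', which is the substantive direction, the plan follows the L\"owner-type construction of \cite{amy11c}. First, testing the defining inequality against $C^1$ curves of commuting self-adjoint \emph{matrix} $n$-tuples of every size $N$ (with nonnegative derivative and joint spectra staying inside $U$) converts local operator monotonicity into positive semidefiniteness, for each $N$, of finite matrices built from the first-order divided differences of $f$ on $U$. A normal-families/compactness argument then repackages these conditions as positive semidefinite kernels $A^i$ on $U$ satisfying $f(x)-f(y)=\sum_i(x_i-y_i)A^i(x,y)$. One then defines $F$ on $\Pi^n$ through the Moore--Aronszajn space attached to the kernel $\sum_i(z_i-\overline{w_i})A^i$, checks that $F$ is holomorphic with $\impart{F(z)}=\sum_i\impart{z_i}\,A^i(z,z)\geq 0$ on $\Pi^n$, and that uniform estimates near $U$ force $F$ to be continuous up to $U$ with $F|_U=f$; thus $F\in\L(U)$. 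The classical one-variable L\"owner theorem \cite{lo34} enters in controlling $F$ along totally positive slices and in ensuring the model kernel genuinely represents a function.

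The main obstacle is precisely this converse: upgrading the pointwise information (nonnegativity of directional derivatives along totally positive directions at each point) to the global positive semidefiniteness of the kernels $A^i$ is the several-variable L\"owner phenomenon, and is the main content of \cite{amy11c}. In this exposition I would not reprove it but simply invoke their theorem once the dictionary of the first paragraph is in place.
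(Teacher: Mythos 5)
Your approach matches the paper's: Theorem \ref{amy11c} is stated there as an imported result from \cite{amy11c} with no proof given, and your plan ultimately reduces to fixing the dictionary of definitions and invoking that paper, which is exactly what is done. The sketches of the two implications you add are reasonable background but are not required, since the paper treats this as a black-box citation.
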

	
	We now prove a proposition that allows us to identify $f$ with its extension uniquely.
	\begin{prop}
		Suppose $U\subseteq \mathbb{R}^n$ is an open set.
		Let  $f \in L(U).$
		If two functions $F,G \in \L(U)$ satisfy $F|_U = f$ and $G|_U = f,$ then $F = G.$
	\end{prop}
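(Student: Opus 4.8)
The plan is to reduce the statement to the classical uniqueness of analytic continuation. Both $F$ and $G$ are, by definition of the L\"owner class $\L(U)$, analytic on $\Pi^n$ and continuous on $\Pi^n \cup U$; moreover they agree on $U$, which is a nonempty open subset of $\mathbb{R}^n$, hence a set with nonempty interior in the Shilov-type boundary of the polydisk. So the task is to promote agreement on the real boundary piece $U$ to agreement on all of $\Pi^n$, after which restricting back to $U$ trivially gives $F=G$ as functions on $\Pi^n \cup U$.

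First I would consider the difference $h = F - G$, which is analytic on $\Pi^n$, continuous on $\Pi^n \cup U$, and vanishes identically on $U$. The main step is to show that a function analytic on $\Pi^n$ and continuous up to a nonempty open subset $U$ of the distinguished boundary $\mathbb{R}^n$, which vanishes on $U$, must vanish identically. One clean way to do this: fix a point $x^0 \in U$ and a small polydisk-like polyradius so that the real box $\prod_j (x^0_j - \varepsilon, x^0_j + \varepsilon)$ lies in $U$; restrict $h$ to the tube $\prod_j \{z_j : |\mathrm{Re}\, z_j - x^0_j| < \varepsilon,\ 0 < \mathrm{Im}\, z_j\}$ (one can further localize to a product of half-disks). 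On each coordinate slice, with the other coordinates fixed in the upper half-plane, $h$ is a one-variable function analytic in $\Pi$ and continuous up to the interval $(x^0_j - \varepsilon, x^0_j + \varepsilon)$ where it vanishes; by the Schwarz reflection principle it extends analytically across that interval and, vanishing on a set with a limit point, is identically zero on its connected slice domain. Iterating this coordinate by coordinate (equivalently, invoking the several-variable edge-of-the-wedge or reflection principle, cf.\ \cite{rudeow}) yields $h \equiv 0$ on a nonempty open subset of $\Pi^n$, and then by the identity theorem for several complex variables (the polydomain $\Pi^n$ being connected) $h \equiv 0$ on all of $\Pi^n$.

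Finally, by continuity on $\Pi^n \cup U$, the vanishing of $h$ on $\Pi^n$ forces $h = 0$ on $\overline{\Pi^n} \cap (\Pi^n \cup U) = \Pi^n \cup U$, so $F = G$ everywhere on their common domain; in particular $F = G$, and both restrict to $f$ on $U$ as required.

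The step I expect to be the main obstacle is the passage from "vanishes on the open real box $U$" to "vanishes on a nonempty open subset of $\Pi^n$": one must be careful that the slice-wise reflection argument is legitimate, i.e.\ that for fixed upper-half-plane values of the remaining variables the one-variable function is genuinely continuous up to the real interval and genuinely analytic in $\Pi$, and that after reflecting in the first variable the reflected function still has the continuity needed to reflect in the second variable, and so on. This is exactly the bookkeeping that the edge-of-the-wedge / iterated Schwarz reflection machinery handles, so I would cite \cite{rudeow} for the several-variable version rather than reprove it; the remaining pieces (identity theorem on the connected set $\Pi^n$, continuity up to the boundary) are standard.
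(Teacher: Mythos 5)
Your proof is correct and follows essentially the same route as the paper: form $h = F - G$, invoke Schwarz reflection together with the edge-of-the-wedge theorem (cf.\ \cite{rudeow}) to analytically continue across $U$, and finish with the identity theorem, using that a nonempty open set in $\mathbb{R}^n$ is a uniqueness set for holomorphic functions. Your worry about the naive slice-wise reflection is well founded---for fixed $z_2,\dots,z_n\in\Pi$ the point $(x_1,z_2,\dots,z_n)$ with $x_1$ real lies in neither $\Pi^n$ nor $U$, so the one-variable slice is not actually continuous up to the real interval---but your fallback of citing the full edge-of-the-wedge theorem resolves this and coincides with what the paper does.
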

	\begin{proof}
		By the Schwarz reflection principle, $F$ and  $G$ extend to the set $-\Pi^n.$ Now applying the edge of the wedge theorem, there is $D,$
		a neighborhood of $U$ in $\mathbb{C}^n$ such that $F$ and $G$ extend to $D.$ Now, $F-G\equiv 0$ on $U$. Open sets in
		$\mathbb{R}^n$ are sets of uniqueness for analytic functions, so $F-G\equiv 0$ on $D.$ Thus since $F$ and $G$
		agree on an open set, they must be equal by analytic continuation.
	\end{proof}
	Thus, for locally operator monotone functions, we abuse notation to identify $f$ with its extension. (Such an extension must be unique because,
	by the edge-of-the-wedge theorem, $f$ analytically continues to a neighborhood of $U$ in $\mathbb{C}^n$.) So in the case of open sets we can
	also make the identification $\L(U)=L(U).$
	
	We now prove \ref{maxcontexist} as corollary of \ref{amy11c}.
	\begin{proof}[Proof of \ref{maxcontexist}]
		Suppose $f\in L(U).$ Then $f$ extends to a function on $\Pi^n.$ Let $\hat{U}$ be the union
		of all open sets $V\subseteq \mathbb{R}^n$ such that $f$ extends continuously to $V$ as $f_V$.
		Note, for any two such sets $W, V,$ we get that their extensions agree on $W\cap V$
		because they are both equal to the limit of the values from the function $f$.
		Thus, $f$ continues to $\hat{U}$ as some $F$ which is maximal and unique by its definition.
	\end{proof}
	
	\subsection{The Nevanlinna representation}
		We use the following representation
		due to R. Nevanlinna \cite{nev22}, presented in the following form.
		\begin{thm}[Nevanlinna] 
			Let $h:\Pi\rightarrow \Pi$ is analytic and satisfies
				$$\limsup_{s\rightarrow\infty} s|h(is)|<\infty.$$
			Then, there is a unique finite Borel measure $\mu$ on $\mathbb{R}$ such that
				$$h(z)=\int \frac{1}{t-z}d\mu(t).$$
		\end{thm}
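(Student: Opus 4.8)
The plan is to bootstrap from the classical Herglotz representation of positive harmonic functions and then let the decay hypothesis strip away every term except the Cauchy integral, recovering uniqueness at the end from Stieltjes inversion.

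First I would set $u := \impart{h}$, a strictly positive harmonic function on $\Pi$ since $h$ maps into $\Pi$. By the Herglotz representation of positive harmonic functions on the upper half plane, there are a constant $c \geq 0$ and a positive Borel measure $\mu$ on $\mathbb{R}$ with $\int (1+t^2)^{-1}\,d\mu(t) < \infty$ such that
$$u(x+iy) = cy + \int \frac{y}{(x-t)^2 + y^2}\,d\mu(t),$$
where the customary factor $1/\pi$ has been absorbed into $\mu$. The next step is to test this along the imaginary axis: with $z = is$,
$$s\,u(is) = cs^2 + \int \frac{s^2}{t^2 + s^2}\,d\mu(t).$$
Since $0 < u(is) \leq \abs{h(is)}$, the hypothesis $\limsup_{s\to\infty} s\abs{h(is)} < \infty$ makes the left-hand side bounded. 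As the integral term is nonnegative, the $cs^2$ term forces $c = 0$; and since $s^2/(t^2+s^2)$ increases to $1$, monotone convergence then gives $\mu(\mathbb{R}) = \lim_{s\to\infty} s\,u(is) < \infty$. Hence $\mu$ is finite and the representation of $u$ is a pure Poisson integral.

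I would then compare $h$ with the Cauchy transform $z \mapsto \int (t-z)^{-1}\,d\mu(t)$, which is analytic on $\Pi$ because $\abs{(t-z)^{-1}} \leq (\impart{z})^{-1}$ locally uniformly in $z$. Since $\impart{(t-z)^{-1}} = \impart{z}/\abs{t-z}^2$ is exactly the Poisson kernel appearing above, the difference $h(z) - \int (t-z)^{-1}\,d\mu(t)$ is analytic on the connected set $\Pi$ with identically vanishing imaginary part, hence equals a real constant $a$. Thus $h(z) = a + \int (t-z)^{-1}\,d\mu(t)$. Because $\mu$ is finite and $\abs{(t-is)^{-1}} \leq 1/s$, the integral tends to $0$ as $s \to \infty$, so $h(is) \to a$; boundedness of $s\abs{h(is)}$ then forces $a = 0$, giving $h(z) = \int (t-z)^{-1}\,d\mu(t)$. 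For uniqueness, if $\mu_1,\mu_2$ both represent $h$ their Poisson integrals agree (both equal $\impart{h}$), so the Stieltjes inversion formula $\mu_i\big((\alpha,\beta)\big) = \tfrac{1}{\pi}\lim_{y\downarrow 0}\int_\alpha^\beta \impart{h(x+iy)}\,dx$, valid for all but countably many $\alpha < \beta$, forces $\mu_1 = \mu_2$.

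The delicate point is making one growth estimate do double duty: it must simultaneously annihilate the hidden linear term $cy$ and upgrade $\mu$ from merely Poisson-admissible to finite, and only after that does the additive constant disappear for free. Everything else is bookkeeping around the one genuine input, the positive-harmonic Herglotz representation --- which, to stay fully self-contained, one may itself derive by transporting $h$ to the disk through a conformal map and invoking the Riesz--Herglotz theorem there.
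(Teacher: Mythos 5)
Your proof is correct: the passage from the Herglotz representation of $\impart{h}$ to the vanishing of the linear term and finiteness of $\mu$, the comparison with the Cauchy transform to kill the additive constant, and the Stieltjes inversion argument for uniqueness are all sound. Note that the paper offers no proof of this statement at all --- it is quoted as a classical result of Nevanlinna \cite{nev22} --- and what you have written is precisely the standard classical derivation, so there is nothing to reconcile between the two.
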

		
		We now use this representation to prove a local Julia inequality for one variable Pick functions.
		This will be essential to the proof of the several variables analogue given in Theorem \ref{julialine}.
		\begin{lem} \label{juliaone}
			Suppose $h:\Pi\rightarrow \Pi$ is analytic, extends to a continuous
			real valued function on $(-b,b)$ and $h(0)=0$.
			Then, $h$ is analytic on $(-b,b).$
			The power series expansion of $h(bz)$ at zero
				$h(bz) = \sum^{\infty}_{n=1} a_n z^n$
			satisfies
				$|a_k| \leq a_1.$	
		\end{lem}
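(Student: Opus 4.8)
The plan is to reflect $h$ across the real axis to gain analyticity, then transplant the problem into the setting of the stated Nevanlinna theorem by the change of variable $z\mapsto -1/z$, and finally read off the Taylor coefficients of $h$ as the moments of the resulting Nevanlinna measure.

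First I would apply the Schwarz reflection principle. Since $h$ is analytic on $\Pi$, continuous up to $(-b,b)$, and real there, it continues analytically to $\mathbb{C}\setminus\{x\in\mathbb{R}\mid |x|\geq b\}$; in particular $h$ is analytic on $(-b,b)$ and on the disk $\{|w|<b\}$. Hence the Taylor expansion of $h$ at $0$, which has no constant term because $h(0)=0$, say $h(w)=\sum_{n\geq1}c_nw^n$, has radius of convergence at least $b$ and real coefficients, so $h(bz)=\sum_{n\geq1}c_nb^nz^n$ converges for $|z|<1$ and $a_n=c_nb^n\in\mathbb{R}$. The inequality to prove is therefore $|c_n|b^n\leq c_1b$ for all $n$; note that $c_1=h'(0)\geq 0$ because $h$ is a Pick function, so this is at least plausible.

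The crucial step is to apply the stated Nevanlinna theorem to $g(z):=h(-1/z)$. Since $z\mapsto -1/z$ maps $\Pi$ into $\Pi$, the function $g$ is analytic and maps $\Pi$ into $\Pi$ (and $h$ is nonconstant, as $h(\Pi)\subseteq\Pi$ while $h$ is real on $(-b,b)$); because $h(w)=c_1w+O(w^2)$ near $0$ we get $s|g(is)|=s|h(i/s)|\to|c_1|<\infty$, so $g$ satisfies the hypothesis of the Nevanlinna theorem and $g(z)=\int\frac{d\mu(t)}{t-z}$ for a unique finite positive Borel measure $\mu$. Now $z\mapsto -1/z$ carries $\Pi\cup\big((-b,0)\cup(0,b)\big)$ onto $\Pi\cup\{y\in\mathbb{R}\mid |y|>1/b\}$, and $h$ is real and continuous on $(-b,b)$, so $g$ is real and continuous on $(-\infty,-1/b)\cup(1/b,\infty)$; by Stieltjes inversion $\mu$ gives no mass to those two intervals, i.e. $\supp{\mu}\subseteq[-1/b,1/b]$.

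It then remains to match two expansions valid for $|z|>1/b$. Expanding $\frac{1}{t-z}=-\sum_{n\geq0}t^nz^{-n-1}$ under the integral gives $g(z)=-\sum_{n\geq0}m_nz^{-n-1}$ with moments $m_n=\int t^n\,d\mu(t)$, while $g(z)=h(-1/z)=\sum_{n\geq1}(-1)^nc_nz^{-n}$; equating coefficients yields $m_{n-1}=(-1)^{n+1}c_n$, hence $c_1=m_0=\mu(\mathbb{R})\geq0$ and $|c_n|=|m_{n-1}|$ for every $n$. Using $\supp{\mu}\subseteq[-1/b,1/b]$,
$$|a_n|=|c_n|b^n=b^n\left|\int t^{n-1}\,d\mu(t)\right|\leq b^n\int|t|^{n-1}\,d\mu(t)\leq b^n\cdot b^{-(n-1)}\mu(\mathbb{R})=b\,\mu(\mathbb{R})=a_1,$$
which is the claim. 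The one genuinely nontrivial choice is the substitution $z\mapsto -1/z$: it must be arranged so that $g$ simultaneously inherits the decay at $i\infty$ needed to invoke the stated (decay-normalized) Nevanlinna theorem and has its real interval pushed out to a neighborhood of $\infty$, so that the Nevanlinna measure lies in $[-1/b,1/b]$ and its moments are exactly the Taylor data of $h$; once this dictionary is set up, the final estimate is just the elementary moment bound $|m_{n-1}|\leq (1/b)^{n-1}\mu(\mathbb{R})$ for a measure supported in $[-1/b,1/b]$.
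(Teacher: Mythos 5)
Your argument is correct and is essentially the paper's own proof: Schwarz reflection for analyticity, a reciprocal change of variable so the real interval is pushed to a neighborhood of infinity, the one-variable Nevanlinna representation with the measure shown (via Stieltjes inversion) to be supported in a compact interval, and then identification of the Taylor coefficients with moments to get $|a_k|\leq a_1$. The only differences are cosmetic --- you use $h(-1/z)$ with support in $[-1/b,1/b]$ where the paper uses $h(b/z)$ normalized to $[-1,1]$ (your sign choice is in fact the cleaner one for mapping $\Pi$ to $\Pi$), and you spell out the support argument the paper leaves as an exercise.
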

		\begin{proof}
			The analyticity of $h$ is classical and comes from the Schwarz Reflection Principle. We
			leave the details of this calculation to the reader.
			
			Let $\hat{h}(z) = h(\frac{b}{z}).$ This has the Laurent expansion,
			$\hat{h}(z) = \sum^{\infty}_{k=1} a_k z^{-k}.$
			Apply the Nevanlinna representation to $\hat{h}.$
				$$\hat{h}(z)=\int \frac{1}{t-z}d\mu(t).$$
				
			We now note $\mu(\mathbb{R}\setminus [-1,1]) = 0.$ This is an exercise in measure theory. For similar manipulations, see Bhatia \cite{bh97}.
			
			
			Thus, everything about
				$$\hat{h}(z)=\int \frac{1}{t-z}d\mu(t) =\int \sum^{\infty}_{n=0} \frac{t^n}{z^{n+1}} d\mu(t)$$
			is absolutely and uniformly convergent on $|z| > 1+\epsilon.$ So we can interchange
			the summation and integral.
				$$\hat{h}(z)=\sum^{\infty}_{n=0} \frac{1}{z^{n+1}}\int t^n d\mu(t).$$
			So, equating coefficients,
				$$a_{n+1} = \int t^n d\mu(t).$$
			Since the support of $\mu$ is contained within the unit interval,
				$$|a_{k}| \leq \int |t|^{k-1} d\mu(t) \leq \int 1 d\mu(t) = a_1.$$
			This concludes the proof.
		\end{proof}
	\subsection{An extension of Agler, Tully-Doyle and Young}
		The Nevanlinna representation for Pick functions has recently been extended in \cite{aty11}. 
		\begin{thm}[Agler, Tully-Doyle, Young]
			Let $h:\Pi^2 \rightarrow \overline{\Pi}$ be analytic, furthermore suppose
				$$\lim_{s\rightarrow\infty} sh(is,is)$$
			is finite. Then $h$ has a Type I Nevanlinna representation. That is, there is a separable Hilbert space $\mathcal{H}$, with
			an unbounded self-adjoint operator $A$, a contraction $0\leq Y\leq 1$ and an $\alpha \in \mathcal{H}$ such that
				$$h(z) = <\res\alpha,\alpha>$$
			where $z_Y = Yz_1 + (1-Y)z_2.$
		\end{thm}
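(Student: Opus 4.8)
\emph{Strategy.} The plan is to construct the representation by hand and then read the theorem off. Every analytic $h:\Pi^2\to\overline\Pi$ admits a L\"owner decomposition (the L\"owner and Pick classes coinciding in two variables): fix positive semidefinite kernels $K^1,K^2$ on $\Pi^2$ with
$$h(z)-\overline{h(w)}=(z_1-\overline{w_1})\,K^1(z,w)+(z_2-\overline{w_2})\,K^2(z,w).$$
I will then build a separable Hilbert space $\mathcal H$, vectors $u_z\in\mathcal H$, a positive contraction $Y$, a vector $\alpha\in\mathcal H$, and a self-adjoint operator $A$ so that, with $z_Y:=z_1Y+z_2(1-Y)$,
$$\langle u_z,u_w\rangle=K^1(z,w)+K^2(z,w),\quad \langle Yu_z,u_w\rangle=K^1(z,w),\quad (A-z_Y)u_z=\alpha,\quad \langle u_z,\alpha\rangle=h(z).$$
Granting this, $\operatorname{Im}z_Y\succeq\varepsilon(z)I$ for $z\in\Pi^2$ with $\varepsilon(z)>0$, so $A-z_Y$ is boundedly invertible, $u_z=(A-z_Y)^{-1}\alpha$, and $h(z)=\langle u_z,\alpha\rangle=\langle(A-z_Y)^{-1}\alpha,\alpha\rangle$, which is the assertion. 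The four relations are precisely what the desired formula imposes on $u_z:=(A-z_Y)^{-1}\alpha$ once the L\"owner kernels of $(A,Y)$ are taken to be $K^1,K^2$, so this is the model to aim for.

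\emph{Building $\mathcal H$, $u_z$, $Y$.} Take $\mathcal H$ to be the Hilbert space of the Kolmogorov decomposition of $K:=K^1+K^2$: there are $u_z\in\mathcal H$ with $\langle u_z,u_w\rangle=K(z,w)$ and $\overline{\operatorname{span}}\{u_z:z\in\Pi^2\}=\mathcal H$; separability holds because $z\mapsto u_z$ is continuous and $\Pi^2$ is separable. As $0\preceq K^1\preceq K$ (since $K^2\succeq0$), a Douglas-type argument for kernels gives a unique $Y$ with $0\le Y\le1$ and $\langle Yu_z,u_w\rangle=K^1(z,w)$, whence $\langle(1-Y)u_z,u_w\rangle=K^2(z,w)$.

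\emph{Building $\alpha$ and $A$; where the hypothesis enters.} On the diagonal, $z\mapsto h(z,z)$ is a one-variable Pick function and, by hypothesis, $s\,h(is,is)$ has a finite limit, so $\limsup_{s\to\infty}s|h(is,is)|<\infty$; by the Nevanlinna representation, $h(z,z)=\int(t-z)^{-1}\,d\mu(t)$ for a finite measure $\mu$, and the diagonal values of $K$ are exactly the corresponding Nevanlinna kernel $\int d\mu(t)/\big((t-z)(t-\overline w)\big)$. Hence $u_{(z,z)}\mapsto(t-z)^{-1}$ extends to an isometry of $\overline{\operatorname{span}}\{u_{(z,z)}\}$ onto $L^2(\mu)$; transporting the constant function $1$ back through it produces $\alpha\in\mathcal H$ with $\|\alpha\|^2=\mu(\mathbb R)<\infty$ and $is\,u_{(is,is)}\to-\alpha$ — this is the one place the finiteness of the diagonal limit is used. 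Letting $w=(is,is)$ in the L\"owner decomposition and $s\to\infty$ then yields $\langle u_z,\alpha\rangle=h(z)$ for every $z\in\Pi^2$. Now define $A$ on $\mathcal D:=\operatorname{span}\{u_z\}$ by $Au_z:=\alpha+z_Yu_z$, extended linearly. Pairing against $u_w$, both the well-definedness of $A$ (if $\sum c_iu_{z_i}=0$ then $\sum c_iAu_{z_i}=0$) and its symmetry on $\mathcal D$ collapse to the L\"owner decomposition together with $Y=Y^*$ and $\langle u_z,\alpha\rangle=h(z)$. Thus $A$ is closable and its closure $\overline A$ satisfies $(\overline A-z_Y)u_z=\alpha$.

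\emph{Self-adjointness: the main obstacle.} What remains, and what I expect to be the technical heart, is to promote $\overline A$ from closed symmetric to self-adjoint; the natural route is to show that both deficiency spaces are trivial, i.e.\ that $\overline A^{*}g=z_Y^{*}g$ for some $z\in\Pi^2$ and $g\in\mathcal H$ forces $g=0$. Pairing this equation with $u_w$ and using $(\overline A-z_Y)u_z=\alpha$ (the case $w=z$ killing the $\langle\alpha,g\rangle$ term) gives
$$(w_1-z_1)\langle u_w,Yg\rangle+(w_2-z_2)\langle u_w,(1-Y)g\rangle=0\qquad(w\in\Pi^2),$$
which, combined with the Cauchy--Schwarz bounds $|\langle u_w,Yg\rangle|^2\le K^1(w,w)\langle Yg,g\rangle$ and $|\langle u_w,(1-Y)g\rangle|^2\le K^2(w,w)\langle(1-Y)g,g\rangle$, the decay of $K^1(w,w)+K^2(w,w)$ at diagonal infinity (which is what the finite limit provides), and the totality of $\{u_w\}$, should force $g=0$. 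Morally, along the diagonal the representing object is multiplication by $t$ on $L^2(\mu)$, which is honestly self-adjoint, and the two-kernel L\"owner structure must propagate this to all of $\mathcal H$; should a residual defect survive, one would instead pass to a minimal self-adjoint dilation of $\overline A$ and verify that it still realizes $h$ through the same resolvent formula — the finiteness of $\lim_s s\,h(is,is)$ being exactly what keeps the representation Type I rather than one of the more exotic Nevanlinna types. Once $\overline A$ is self-adjoint, set $A:=\overline A$: then $A-z_Y$ is boundedly invertible on $\Pi^2$ and $h(z)=\langle u_z,\alpha\rangle=\langle(A-z_Y)^{-1}\alpha,\alpha\rangle$, with $\mathcal H$ separable, $0\le Y\le1$ a contraction, and $A$ (possibly unbounded) self-adjoint, as required.
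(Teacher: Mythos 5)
This statement is quoted by the paper from Agler--Tully-Doyle--Young \cite{aty11} and is not proved in the text, so your attempt should be measured against the argument there: ATY Cayley-transform $h$ to a Schur--Agler function on the bidisk, invoke Agler's unitary transfer-function realization, and obtain $A$ as the inverse Cayley transform of a unitary, so that self-adjointness is automatic. Your constructive route --- L\"owner kernels $K^1,K^2$ (whose existence in two variables is itself the same bidisk realization theorem you are implicitly importing), the Kolmogorov space of $K^1+K^2$, $Y$ by a Douglas-type argument, $\alpha$ from the diagonal Nevanlinna representation, and $A u_z:=\alpha+z_Y u_z$ --- is sound up to and including symmetry of $A$: the pairing computation you indicate does verify well-definedness and symmetry, and the hypothesis enters correctly through the finite measure $\mu$ for $h(z,z)$. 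One slip there: the isometry $u_{(z,z)}\mapsto (t-z)^{-1}$ is generally \emph{not} onto $L^2(\mu)$ (Cauchy kernels with poles only in $\Pi$ need not be dense), so you cannot simply ``transport $1$ back''; this is harmless only because $is(t-is)^{-1}\to -1$ in $L^2(\mu)$, so $-1$ lies in the closed range and $\alpha=-\lim_{s\to\infty} is\,u_{(is,is)}$ can be defined directly as a norm limit in $\mathcal H$.

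The genuine gap is exactly the step you flag: promoting the closed symmetric $\overline A$ to a self-adjoint operator realizing $h$. ``Should force $g=0$'' is not an argument; moreover the equation $\overline A^*g=z_Y^*g$ is not the standard deficiency condition (one must show $\ker(\overline A^*\mp i)=\{0\}$, a two-sided statement), and nothing in your construction visibly excludes nonzero deficiency indices. Your fallback can actually be made rigorous and would finish the proof, but you do not carry it out: by Naimark, $\overline A$ has a self-adjoint extension $\tilde A$ on a separable $\mathcal K=\mathcal H\oplus\mathcal H'$; putting $\tilde Y=Y\oplus Y_0$ with any $0\le Y_0\le 1$, one has $z_{\tilde Y}u_z=z_Yu_z$, hence $(\tilde A-z_{\tilde Y})u_z=\alpha$, and since $\operatorname{Im}z_{\tilde Y}\ge\min(\operatorname{Im}z_1,\operatorname{Im}z_2)>0$ the operator $\tilde A-z_{\tilde Y}$ is boundedly invertible, giving $h(z)=\langle(\tilde A-z_{\tilde Y})^{-1}\alpha,\alpha\rangle$ on the (possibly larger, still separable) space --- which is all the theorem asserts. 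Also, your remark that finiteness of $\lim_s s\,h(is,is)$ is what ``keeps the representation Type I'' at this last stage is misplaced: the hypothesis has already done its work in producing the finite measure $\mu$ and hence $\alpha$. As written, then, the proof is incomplete at its technical heart; either execute the extension argument above or follow the ATY route through the bidisk model, where unitarity of the colligation renders the self-adjointness issue moot.
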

		Superficially, this representation looks much different than
		the Nevanlinna representation above, but its algebraic properties are essentially the same when the functional calculus
		is applied correctly. Similar representations exist for dimensions greater that $2$ for all functions in the L\"owner class \cite{amy11c}.
		However, we obtain a two varaible result and lift it to the entire Pick class, a strictly larger class than the L\"owner functions in more than $3$ variables.
		It will be essential to our stem result, the local Julia inequality for a point given in Theorem \ref{juliapoint}.
	
\section{Regulation estimates}\label{regest}
	This section gives some estimates on some specific regulators defined in Definition \ref{regdef}. Thus, for these sets
	we give an approximation of the conclusion of the above wedge-of-the-edge theorem, Theorem \ref{mainwoe}.
	
	We define special wedge-shaped sets for which it will be tractible to compute estimates on the regulators.
	\begin{defin} \label{regdef}
		Suppose there is a point $p\in \rn,$
		two positive slopes $0 < m_1 < m_2,$
		and a positive $\Dx \in \rplus$. We define the following objects
		\begin{itemize}
			\item To define the wedge, we first need its vertices,
				$$\vec{M}_n = \{m\in \rn| m_1=1, m_i \in \{m_1,m_2\}, \forall i>1\}.$$
			\item The \emph{real wedge} is the set is the union of two pyramids,
				$$\W = \text{Hull }\{p, p + \Dx\vec{M}\}
				\cup \text{Hull }\{p, p - \Dx\vec{M}\}.$$
			\item The \emph{homogenous polynomial regulator for the wedge} is the function
				$$\q{k}[z] =q^{\W}(p)[z] $$
			\item The \emph{regulated set for a wedge} is
				$$\Qh = \hat{Q}^{\W}(p)$$
			\item The \emph{real regulated set for a wedge} is
				$$\Q = Q^{\W}(p).$$
		\end{itemize}
	\end{defin}
	We show that the real regulated set for a wedge contains a parallelogram around $p.$
	We will need the fact that a theorem in \cite{gau74} combined with a calculation in \cite{sze59}
	gives an estimate for the norm of the inverse
	of the Vandermonde matrix for Chebychev interpolation on [-1,1]. That is,
		$$\|V^{-1}_n\|_\infty\leq \frac{3^{3/4}}{4}[(1+\sqrt(2))^n+(1-\sqrt(2))^n]$$
	We use this to bound a polynomial, the homogenous polynomial regulator for a wedge, by interpolating it.
	\begin{prop}\label{qpoly}
		The function $\qtwo{k}$ satisfies
		\begin{flalign*}
			& \qtwo{k}[z]\leq &
		\end{flalign*}	
		\begin{flalign*}
			& &\frac{1}{\delta^k}(k+1)\mxthm\left(|z_1|^k,\left|\frac{z_2 - (m_1+m_2)z_1}{m_2-m_1}\right|^k\right)\frac{3^{3/4}}{4}[(1+\sqrt2)^k+(1-\sqrt2)^k].
		\end{flalign*}
	\end{prop}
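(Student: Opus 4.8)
The plan is to unwind the definition of $q^2_k(\delta;m_1,m_2)[z]=q^W_k(p)[z]$ into a classical extremal problem for one-variable polynomials on a bounded interval, and then feed the stated Vandermonde estimate into a Lagrange interpolation argument. Here $W=W^2_p(\delta;m_1,m_2)$, and an admissible competitor $q$ in the supremum defining $q^W_k(p)$ is a homogeneous polynomial of degree $k$ in the two variables obeying the growth constraint of Definition \ref{regdef}.

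First I would compute the reach function $d^W(p)[\vec x]$. Since $W$ is the union of two pyramids that are reflections of one another through $p$, the two suprema in the definition of $d^W(p)$ coincide, so $d^W(p)[\vec x]=\sup\{s:p+s\vec x\in W\}$. For $\vec x$ in the totally positive directions the ray $s\mapsto p+s\vec x$ meets $W$ precisely when $x_2/x_1\in[m_1,m_2]$, and it then exits through the outer face $\{p+\delta t:t\in\text{Hull}(\vec M)\}$, all of whose points have first coordinate $p_1+\delta$; hence $d^W(p)[\vec x]=\delta/x_1$ on that slope cone and $d^W(p)[\vec x]=0$ on the remaining totally positive directions.

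Next I would transfer the problem to this picture and run the interpolation. Dehomogenizing, write $\tilde q(\zeta)=q(1,\zeta)$, a polynomial of degree at most $k$ determining $q$ uniquely; by the reach computation the constraint on $q$ over the totally positive orthant is vacuous off the slope cone and on it amounts to $\sup_{\zeta\in[m_1,m_2]}|\tilde q(\zeta)|\le\delta^{-k}$, while $|q(z_1,z_2)|=|z_1|^k\,|\tilde q(z_2/z_1)|$ for $z_1\ne0$ (the case $z_1=0$ being read off from the leading coefficient). So everything reduces to bounding $|\tilde q(\zeta)|$ at an arbitrary complex $\zeta$ for a degree-$\le k$ polynomial of sup norm $\le\delta^{-k}$ on the real interval $[m_1,m_2]$. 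Normalizing $[m_1,m_2]$ affinely onto $[-1,1]$ turns $\zeta=z_2/z_1$ into a normalized coordinate whose monomials, after clearing the factor $z_1^k$, are powers of $z_1$ and of the linear form $\frac{z_2-(m_1+m_2)z_1}{m_2-m_1}$ appearing in the statement; interpolating the normalized polynomial at the $k+1$ Chebyshev nodes of $[-1,1]$ writes its monomial-coefficient vector as $V^{-1}$ applied to the vector of nodal values, so each coefficient is at most $\|V^{-1}\|_\infty\,\delta^{-k}\le\frac{3^{3/4}}{4}[(1+\sqrt2)^k+(1-\sqrt2)^k]\,\delta^{-k}$ by the cited bound. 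Summing the at most $k+1$ monomials and bounding each power by the larger of the two relevant $k$-th powers produces the factor $k+1$ and the maximum, and undoing the dehomogenization and taking the supremum over admissible $q$ gives the asserted inequality.

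The interpolation step and the final constant-chasing are routine; the two delicate points are reading off correctly from Definition \ref{regdef} that the totally-positive-direction constraint really collapses to a single sup bound on $[m_1,m_2]$, with nothing extra contributed at the boundary of the slope cone, and doing the affine bookkeeping so that the Chebyshev-node normalization matches the precise linear form and the power $\delta^{-k}$ in the statement. I expect the reach-function computation, together with its correct insertion into the definition of the local regulator, to be the main obstacle; everything downstream is standard one-variable approximation theory.
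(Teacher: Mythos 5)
Your argument is essentially the paper's own proof: take an admissible homogeneous competitor, restrict it to the slope segment (equivalently the values at $(\delta,\delta m)$, $m\in[m_1,m_2]$), affinely normalize $[m_1,m_2]$ to $[-1,1]$, interpolate at the $k+1$ Chebyshev nodes using the cited bound on $\|V_k^{-1}\|_\infty$ to control the monomial coefficients, bound the monomials by $(k+1)\max(1,|w|^k)$, and recover the two-variable estimate by homogeneity. The differences are cosmetic (you dehomogenize via $q(1,\zeta)$ and phrase the constraint through an explicit computation of $d^W(p)$, where the paper substitutes $p(\delta,\delta m)$ directly), and your affine-change bookkeeping carries the same slight looseness that the paper's own substitution does, so this counts as the same approach.
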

	\begin{proof}
		Let $p$ be a homogenous polynomial of degree $k$ with real coefficients.
		Suppose $p(\delta,\delta m)\leq \mx(\delta,\delta m)$
		for $m\in [m_1,m_2].$
		Define $r(z) = p(\delta,\delta [m_1(1-z)+m_2(1+z)]/2).$
		Let $M = \mx(\delta, \delta m_2).$
		Now $|r(z)|<M$ on $[-1,1].$
		So $$r(z)=<(z^i)^k_{i=0},V_k^{-1}r(x_i)^k_{i=0}>$$
		where $x_i$ are the Chebychev nodes.
		Thus,
			$$|r(z)|\leq (k+1)\mx(1,|z|^k)\frac{3^{3/4}}{4}[(1+\sqrt(2))^k+(1-\sqrt(2))^k].$$
		This implies 
			$$|p(\delta,\delta [m_1(z+1)+m_2(z-1)])|\leq (k+1)\mx(1,|z|^k)\frac{3^{3/4}}{4}[(1+\sqrt2)^k+(1-\sqrt2)^k].$$
		Applying homogeneity,
			\begin{flalign*}
				& |p(z_1,z_1 [m_1(z+1)+m_2(z-1)])|\leq &
			\end{flalign*}
			\begin{flalign*}
				& &\frac{z_1^k}{\delta^k}(k+1)\mx(1,|z|^k)\frac{3^{3/4}}{4}[(1+\sqrt2)^k+(1-\sqrt2)^k].
			\end{flalign*}
		Let $z=\frac{z_2/z_1 - (m_1+m_2)}{m_2-m_1}.$ So,
		\begin{flalign*}
				&|p(z_1,z_2)|\leq &
		\end{flalign*}
		\begin{flalign*}
			& & \frac{z_1^k}{\delta^k}(k+1)\mx\left(1,\left|\frac{z_2/z_1 - (m_1+m_2)}{m_2-m_1}\right|^k\right)\frac{3^{3/4}}{4}[(1+\sqrt2)^k+(1-\sqrt2)^k].
		\end{flalign*}
		Simplify to obtain the estimate,
		\begin{flalign*}
			& |p(z_1,z_2)|\leq &
		\end{flalign*}
		\begin{flalign*}
			& & \frac{1}{\delta^k}(k+1)\mx\left(|z_1|^k,\left|\frac{z_2 - (m_1+m_2)z_1}{m_2-m_1}\right|^k\right)\frac{3^{3/4}}{4}[(1+\sqrt2)^k+(1-\sqrt2)^k].
		\end{flalign*}
	\end{proof}
	This immediately implies a much simpler qualitative fact, an approximation of Theorem \ref{mainwoe} via the $n$-th root test.
	\begin{prop}
		The set $\Qhtwo$ contains a neighborhood of $p.$
		In fact, all $z$ satisfying
			$$\mxthm\left(|z_1-x|,\left|\frac{(z_2-y) - (m_1+m_2)(z_1-x)}{m_2-m_1}\right|\right)\leq \frac{\delta}{1+\sqrt2}$$
		are in $\Qhtwo.$
	\end{prop}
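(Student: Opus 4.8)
The plan is to read this off Proposition~\ref{qpoly} through the root test that defines the locally regulated set. Write $p=(x,y)$, and for a point $z=(z_1,z_2)\in\ctwo$ abbreviate
$$R(z)=\mxthm\left(|z_1-x|,\;\left|\frac{(z_2-y)-(m_1+m_2)(z_1-x)}{m_2-m_1}\right|\right).$$
By Definition~\ref{regdef}, $\qtwo{k}[u]$ is the local regulator of the real wedge $W^2_p(\Dx;m_1,m_2)$ evaluated at the displacement $u$, and Proposition~\ref{qpoly} bounds it; taking $u=z-p$ and using $\mxthm(a^k,b^k)=\mxthm(a,b)^k$ for $a,b\ge 0$ gives
$$\qtwo{k}[z-p]\;\le\;\frac{1}{\delta^k}\,(k+1)\,R(z)^k\;\frac{3^{3/4}}{4}\bigl[(1+\sqrt2)^k+(1-\sqrt2)^k\bigr].$$

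Next I would take $k$-th roots and let $k\to\infty$. The factors $(k+1)^{1/k}$ and $\bigl(3^{3/4}/4\bigr)^{1/k}$ tend to $1$, and the only computation worth recording is the elementary limit
$$\lim_{k\to\infty}\bigl[(1+\sqrt2)^k+(1-\sqrt2)^k\bigr]^{1/k}=1+\sqrt2,$$
which holds since $1+\sqrt2>|1-\sqrt2|$ makes the first summand dominate. Multiplying these limits yields
$$\limsup_{k\to\infty}\sqrt[k]{\qtwo{k}[z-p]}\;\le\;\frac{1+\sqrt2}{\delta}\,R(z).$$

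To finish, observe that $\{z\in\ctwo:R(z)<\delta/(1+\sqrt2)\}$ is an open set on which the last display gives $\limsup_k\sqrt[k]{\qtwo{k}[z-p]}<1$; hence this open set lies inside the \emph{interior} of the root-test set, which is exactly $\Qhtwo$. Since $R(p)=0$, this open set contains $p$, which both proves that $\Qhtwo$ is a neighborhood of $p$ and is the explicit sublevel region claimed in the statement. I do not anticipate any genuine obstacle — this is essentially a corollary of Proposition~\ref{qpoly} — beyond the minor point that on the boundary locus $R(z)=\delta/(1+\sqrt2)$ the estimate only yields $\limsup\le 1$, so the clean membership statement really lives on the open region $R(z)<\delta/(1+\sqrt2)$, which is all that the neighborhood claim and the qualitative $n$-th root approximation of Theorem~\ref{mainwoe} require.
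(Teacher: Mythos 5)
Your argument is exactly the paper's intended one: the paper derives this proposition directly from Proposition \ref{qpoly} via the $k$-th root test, precisely the computation you carry out, and gives no further detail. Your observation that the estimate really certifies membership only on the open region where the maximum is strictly less than $\delta/(1+\sqrt2)$ (since the definition of $\Qhtwo$ requires $\limsup<1$ and then takes interiors) is a fair and correct refinement of the statement as printed.
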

	We asserted in the intoduction that Theorem \ref{mainwoe} will imply that
	if $U = \{(x,y)\in\rtwo | xy >  -1\},$ then any locally operator monotone function on $U$ extends to all of $\rtwo.$
	This can be seen as a direct consequence of the above proposition, by taking $(x,y)=(0,0)$ any fixed $0 < m_1< m_2$ and letting $\delta$ tend to infinity.

	Note that the estimates in Proposition \ref{qpoly} are derived from interpolation theory.
	Better estimates would be obtained by developing a polynomial extrapolation
	theory to handle this specific problem, and this is why we defer to abstract
	regulators as opposed to the numerical estimates in the propositions above.
	
	In light of this, for $n$ variables we simply sketch that $\Qh$ has some interior.
	\begin{prop}\label{qnhood}
		The set $\Qh$ contains a neighborhood of $p.$
	\end{prop}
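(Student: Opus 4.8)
The plan is to imitate the two-variable estimate of Proposition \ref{qpoly}, retaining only the qualitative conclusion that $\limsup_{k\to\infty}\sqrt[k]{\q{k}[z]}<1$ for all $z$ in some ball about the origin; by the definition of the locally regulated set this puts a neighbourhood of $p$ inside $\Qh$. Translate so that $p=0$ (the wedge is then centred at $0$, and the regulator data and the homogeneity of the competing polynomials are unchanged). The positive half of $\W$ is the solid pyramid with apex $0$ over the box $\delta B$, where $B:=\{1\}\times[m_1,m_2]^{n-1}\subset\{x_1=1\}$, and the negative half is the pyramid over $-\delta B$. A homogeneous degree-$k$ polynomial $q$ with real coefficients satisfying the regulator constraint is controlled by its values on $\delta B$: with $\tilde q(t):=q(1,t_2,\dots,t_n)$ one has $\delta^k\tilde q(t)=q(\delta,\delta t)$, so the constraint, applied to both halves of $\W$, bounds $|\tilde q|$ on $[m_1,m_2]^{n-1}$ by $C\delta^{1-k}$ with $C=C(m_1,m_2)$.

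Next I would pass from this real bound to a bound on a complex neighbourhood of $0$. Since $\tilde q$ has total degree $\le k$ in the $n-1$ variables $t_i$, applying the one-dimensional Chebyshev/Vandermonde estimate $\|V_k^{-1}\|_\infty\le\tfrac{3^{3/4}}{4}\bigl[(1+\sqrt2)^k+(1-\sqrt2)^k\bigr]$ of Proposition \ref{qpoly} one coordinate at a time bounds the ``Chebyshev coefficients'' of $\tilde q$ by $C\delta^{1-k}\bigl(\tfrac{3^{3/4}}{4}\bigr)^{n-1}\bigl[(1+\sqrt2)^k+(1-\sqrt2)^k\bigr]^{n-1}$; summing over the $\mathrm{poly}(k)$ monomials of total degree $\le k$ then gives, for complex $w$,
$$|\tilde q(w)|\;\le\;C\delta^{1-k}\,\mathrm{poly}(k)\,\bigl[(1+\sqrt2)^k+(1-\sqrt2)^k\bigr]^{n-1}\,\rho(w)^k,$$
where $\rho(w)$ is the maximum over $i$ of the one-dimensional Chebyshev radius of $w_i$ relative to $[m_1,m_2]$. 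By homogeneity $q(z)=z_1^k\tilde q(z_2/z_1,\dots,z_n/z_1)$; substituting $w_i=z_i/z_1$ and clearing the denominators $z_1$ — exactly as in the passage to the simplified estimate at the end of Proposition \ref{qpoly} — gives a bound on $|q(z)|$ valid for all $z\in\cn$ (including $z_1=0$). Taking $k$-th roots and then the supremum over admissible $q$,
$$\limsup_{k\to\infty}\sqrt[k]{\q{k}[z]}\;\le\;\frac{K\,(1+\sqrt2)^{\,n-1}}{\delta}\,\|z\|$$
for a constant $K=K(m_1,m_2,n)>0$. Hence the left side is $<1$ once $\|z\|<\delta/\bigl(K(1+\sqrt2)^{\,n-1}\bigr)$, so that ball about $0$ lies inside $\{z:\limsup_{k}\sqrt[k]{\q{k}[z]}<1\}$; by the definition of $\Qh$, and undoing the translation, $\Qh$ contains the corresponding neighbourhood of $p$.

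The step I expect to be the main obstacle is the multivariate polynomial estimate of the second paragraph. Iterating the one-dimensional Chebyshev bound is lossy and a little delicate to state cleanly, since the constraint set $\delta B$ pins only the non-special coordinates while the point substituted into $q$ is a generic complex point, forcing the homogeneity reduction $z\mapsto(z_2/z_1,\dots,z_n/z_1)$ whose denominators must afterwards be cleared. As only an interior point of $\Qh$ is needed here, and not its exact shape, this loss is harmless — which is, as remarked after Proposition \ref{qpoly}, exactly why one works with the abstract regulators in general; improving it would require a genuine polynomial extrapolation theory adapted to the double-pyramid geometry.
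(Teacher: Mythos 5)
Your sketch is correct and is essentially the paper's argument: the paper's proof is a one-line remark to interpolate with the tensor-product Vandermonde matrix $\bigotimes_{n-1}V_k$ and rerun the two-variable computation of Proposition \ref{qpoly}, which is precisely what your coordinate-by-coordinate Chebyshev iteration amounts to. The loss you flag (total degree $\le k$ versus degree $\le k$ in each coordinate, and the $(1+\sqrt2)^{n-1}$ factor) is real but harmless for the qualitative conclusion, exactly as you note.
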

	\begin{proof}
		This can be seen if we take the multivariate interpolating matrix
		$ \bigotimes_{n-1} V_k $ and repeating the process for Theorem \ref{qpoly}.
	\end{proof}

\section{Local Julia inequalities}\label{juliasection}
	The Julia inequality was discovered by G. Julia in 1920 as an extension of the Schwarz lemma in \cite{ju20}.
	In one form it states that
	if $\varphi$
		extends to $\tau\in\partial \mathbb{D},$ with $|\varphi(\tau)|=1$ and
		$\varphi'(\tau)$ exists, then the following limit exists nontangentially
			$$\alpha := \lim_{\lambda\rightarrow\tau} \frac{1-|\varphi(\lambda)|}{1-|\lambda|}$$
		and
			$$\frac{|\varphi(\lambda)-\varphi(\tau)|^2}{1-|\varphi(\lambda)|^2}
			\leq \alpha\frac{|\lambda-\tau|^2}{1-|\lambda|^2}.$$
	
	The Julia inequality has been generalized by many authors to several variables. A version given in \cite{amy11a}
	states that if $\varphi:\mathbb{D}^2\rightarrow\mathbb{D}$
		extends to $\tau\in\partial \mathbb{D},$ with $|\varphi(\tau)|=1$ and
		$\varphi'(\tau)$ exists, then the following limit exists nontangentially
			$$\alpha := \lim_{\lambda\rightarrow\tau} \frac{1-|\varphi(\lambda)|}{1-\|\lambda\|}$$
		and
			$$\frac{|\varphi(\lambda)-\varphi(\tau)|^2}{1-|\varphi(\lambda)|^2}
			\leq \alpha\frac{\|\lambda-\tau\|^2}{1-\|\lambda\|^2},$$
		furthermore,
			$$\|\phi'(\tau)\|\leq\alpha.$$
	We prove the last inequality, the inequality for a derivative,
	from the bidisk on $\Pi^2$ instead, via the Nevanlinna representation in one and two variables,
	and then lift it to $\Pi^n$ using some geometry.
	This is essentially Theorem \ref{juliapoint}. 
	We then will show how the inequalities strengthen under more rigid regularity conditions than being extremal at a point. Specifically, on a line segment
	and on a general set. We call these these inequalities of directional derivatives the \emph{local Julia inequalities.}
	
	In this section we identify $1 = (1,1,\ldots,1).$
	 
	First we prove an inequality at a point, similar to the Julia-Caratheodory theorem itself.
	\begin{thm} \label{juliapoint}
		Let $p\in \rn.$ If $h\in\p(p),$ and, for some $\x\in \rnplus,$ $h'(p)[\x]$ exists,
		then, $h'(p)[1]$ exists and
			$$|h'(p)[\x]| \leq \ni{\x}h'(\x)[1].$$
	\end{thm}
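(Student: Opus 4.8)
The plan is to slice $h$ down to a single complex variable in each positive direction, dispose of the one- and two-variable cases from the Nevanlinna representations available to us, and then lift to $n$ variables by an induction on the dimension that is purely geometric. For $\vec v\in\rnplus$ the slice $g_{\vec v}(s):=h(p+s\vec v)$ carries $\Pi$ into $\overline\Pi$ (strict positivity of $\vec v$ keeps $p+s\vec v$ in $\Pi^n$ when $s\in\Pi$) and is continuous at $0$, so it is a one-variable Pick function, $h'(p)[\vec v]=g_{\vec v}'(0)$, and directional derivatives at $p$ are homogeneous of degree one in the direction. Translating the range I will take $h(p)=0$; this is legitimate because the hypothesis that $h'(p)[\vec x]$ exists asserts that $p$ is, along $\vec x$, a regular (Carath\'eodory-type) boundary point of $h$, so in particular $h(p)\in\mathbb{R}$.

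For $n=1$ the inequality is just the homogeneity identity $h'(p)[\vec x]=\vec x\,h'(p)[1]$ together with $h'(p)[1]\geq 0$, which one reads off the scalar Nevanlinna representation exactly as in the proof of Lemma \ref{juliaone}. For $n=2$ --- the crux --- I would precompose with the M\"obius change of variables $\zeta_j=-x_j/(z_j-p_j)$. This maps $\Pi^2$ into itself, sends $(\infty,\infty)$ to $p$, and is aimed along $\vec x$ in that the point $(is,is)$ in the $\zeta$-variables corresponds to $z=p+\frac{i}{s}\vec x$; hence the transformed function $k$ satisfies $\lim_{s\to\infty}s\,k(is,is)=i\,h'(p)[\vec x]$, which is finite by hypothesis, and $k$ therefore admits a Type I Nevanlinna representation $k(\zeta)=\langle(A-\zeta_Y)^{-1}\alpha,\alpha\rangle$ with $\zeta_Y=Y\zeta_1+(1-Y)\zeta_2$ and $0\leq Y\leq 1$. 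Pushing a direction $\vec v$ with positive entries through the change of variables and letting the radial parameter tend to $0$, the term $\zeta_Y$ becomes $\frac{i}{\tau}W_{\vec v}$ with $W_{\vec v}:=Y\frac{x_1}{v_1}+(1-Y)\frac{x_2}{v_2}$; because the change of variables was aimed along $\vec x$, this $W_{\vec v}$ is boundedly invertible, so no $B$-point pathology intervenes and $h'(p)[\vec v]=\langle W_{\vec v}^{-1}\alpha,\alpha\rangle$ exists as a nonnegative real for every such $\vec v$. Now $W_{\vec x}=I$ gives $h'(p)[\vec x]=\|\alpha\|^2$, while $W_{(1,1)}=Yx_1+(1-Y)x_2\leq\ni{\vec x}\,I$, so operator monotonicity of inversion yields $h'(p)[1]=\langle W_{(1,1)}^{-1}\alpha,\alpha\rangle\geq\ni{\vec x}^{-1}\|\alpha\|^2$, that is, $|h'(p)[\vec x]|\leq\ni{\vec x}\,h'(p)[1]$.

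For $n\geq 3$ I would induct on the dimension, reducing each step to the two-variable case by slicing onto a $2$-plane. Given $\vec x\in\rnplus$ with $h'(p)[\vec x]$ extant and $M:=\ni{\vec x}$, I raise the entries of $\vec x$ to $M$ one at a time. To raise the $j$th entry, let $\vec y$ denote $\vec x$ with its $j$th entry replaced by $M$, put $c=1$ and $d=x_j/(2M)$ (so $0<d<c$ and $\ni{(c,d)}=1$), and set $\vec a=(\vec x-d\vec y)/(c-d)$, $\vec b=(c\vec y-\vec x)/(c-d)$; a direct check gives $\vec a,\vec b\geq 0$ (indeed $\vec b$ is a nonnegative multiple of $e_j$), $c\vec a+d\vec b=\vec x$, and $\vec a+\vec b=\vec y$. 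Then $G(s,u):=h(p+s\vec a+u\vec b)$ is a two-variable function in $\mathcal{P}((0,0))$ with $G'((0,0))[(c,d)]=h'(p)[\vec x]$, so the two-variable case applied to $G$ shows $G'((0,0))[(1,1)]=h'(p)[\vec y]$ exists and, since $\ni{(c,d)}=1$, that $|h'(p)[\vec x]|\leq h'(p)[\vec y]$. Iterating over $j=1,\dots,n$ reaches $\vec x^{(n)}=M(1,\dots,1)$, so $h'(p)[1]=M^{-1}h'(p)[\vec x^{(n)}]$ exists; and chaining the inequalities, with each intermediate derivative a nonnegative real equal to its own modulus, gives $|h'(p)[\vec x]|\leq h'(p)[\vec x^{(1)}]\leq\cdots\leq h'(p)[\vec x^{(n)}]=\ni{\vec x}\,h'(p)[1]$.

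The main obstacle is the two-variable step, and within it the bookkeeping: getting the single hypothesis that $h'(p)[\vec x]$ exists into a form to which the Agler--Tully-Doyle--Young representation applies --- this is exactly the role of aiming the M\"obius change of variables along $\vec x$, which simultaneously renders every $W_{\vec v}$ boundedly invertible and hence dodges the usual $B$-point/$C$-point subtleties --- and then tracking carefully how directions and radial limits transform under that map. Once the two-variable case is secured, the passage to $n$ variables is soft: beyond it one uses only the elementary coordinatewise bound $\vec x\leq\ni{\vec x}\,(1,\dots,1)$, which is precisely what drives the raise-one-coordinate-at-a-time induction.
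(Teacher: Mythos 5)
Your two-variable step is, modulo packaging, the same as Lemma \ref{juliapoint2} in the paper: both invoke the Agler--Tully-Doyle--Young Type~I Nevanlinna representation, read off $h'(p)[\vec v]=\langle W_{\vec v}^{-1}\alpha,\alpha\rangle$ with $W_{\vec v}$ a convex combination (via $Y$) of the ratios $x_i/v_i$, and then use $W_{\vec x}=I$ together with $W_{(1,1)}\le\ni{\vec x}I$ and operator anti-monotonicity of inversion. The paper reaches the same formulas by first rescaling $z\mapsto bz+p$ and then inverting $z\mapsto -1/z$; you compress these into a single M\"obius map aimed along $\vec x$. Either way the algebra is identical. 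Where you genuinely diverge is the lift from $2$ to $n$ variables. The paper does it in one shot: it constructs two directions $\vec v_1,\vec v_2\in\rnplus$ with $\vec v_1+\vec v_2=1$ and $\frac{1}{\ni{1/\x}}\vec v_1+\ni{\x}\vec v_2=\x$, slices $h$ onto that single $2$-plane, and applies the two-variable lemma once with the direction $b=(\frac{1}{\ni{1/\x}},\ni{\x})$, using $\ni{b}=\ni{\x}$. You instead raise the coordinates of $\vec x$ to $M=\ni{\vec x}$ one at a time, each raise being a slice onto a coordinate-adapted $2$-plane, yielding a chain $|h'(p)[\vec x]|\le h'(p)[\vec x^{(1)}]\le\cdots\le h'(p)[\vec x^{(n)}]=\ni{\vec x}\,h'(p)[1]$. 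Both reductions are correct; the paper's buys a one-application argument and exposes the exact $2$-plane carrying the sharp constant, whereas yours is arguably more transparent (the only geometric input is the coordinatewise bound $\vec x\le\ni{\vec x}1$) and the bookkeeping at each step is trivial, at the cost of $n$ invocations of the two-variable lemma and some care to track that each intermediate directional derivative is a nonnegative real. Either route is a legitimate proof of the theorem.
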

	To prove this fact in general, we first prove it in two variables, so that we can later lift it to
	several variables via a geometric argument.
	\begin{lem}\label{juliapoint2}
		Let $p\in\rtwo$ If $h\in\p(p),$ and, for some $b\in \rtwoplus,$ $h'(p)[b]$ exists,
		then, $h'(p)[1]$ exists and
			$$|h'(p)[b]| \leq \ni{b}h'(p)[1].$$
	\end{lem}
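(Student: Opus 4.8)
The plan is to transport the hypothesis to infinity, apply the Type I Nevanlinna representation of Agler, Tully-Doyle and Young, and then read off both $h'(p)[b]$ and $h'(p)[1]$ from the resulting operator-theoretic formula. First I would normalize: translating, take $p=0$. If $h$ is constant the inequality is trivial, so assume $h$ is nonconstant; then $h(\Pi^2)\subseteq\Pi$, and since the hypothesis ``$h'(0)[b]$ exists'' is of Julia--Carath\'eodory type, it forces the nontangential boundary value $h(0)$ to be real. A real constant may be subtracted from a member of $\p(0)$ without leaving the class and without changing any directional derivative, so we may further assume $h(0)=0$.

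Next I would set $H(w_1,w_2)=h(-b_1/w_1,-b_2/w_2)$. For $w\in\Pi^2$ one has $-b_j/w_j\in\Pi$, so $H\colon\Pi^2\to\overline\Pi$ is analytic, and
\[
 sH(is,is)=s\,h\!\left(\tfrac{i}{s}\,b\right)\longrightarrow i\,h'(0)[b]\qquad(s\to\infty),
\]
which is finite by hypothesis. Thus $H$ satisfies the hypotheses of the theorem of Agler, Tully-Doyle and Young, so there are a separable Hilbert space $\H$, an unbounded self-adjoint operator $A$ on $\H$, a contraction $0\le Y\le 1$, and $\alpha\in\H$ with
\[
 H(w)=\langle (A-w_Y)^{-1}\alpha,\alpha\rangle,\qquad w_Y=Yw_1+(1-Y)w_2 .
\]

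I would then extract the two derivatives by letting $w$ run along suitable rays. On the diagonal $w_1=w_2=w$ one has $w_Y=wI$, so $h'(0)[b]=\lim_{w\to\infty}(-w)\langle (A-wI)^{-1}\alpha,\alpha\rangle=\norm{\alpha}^2$, using the standard asymptotics $-w(A-wI)^{-1}=(I-A/w)^{-1}\to I$ (valid in a sector about the positive imaginary axis, where the resolvent norm is uniformly bounded). Along the curve $w=(-b_1/\zeta,-b_2/\zeta)$, $\zeta\to 0$ in $\Pi$, one has $w_Y=-\zeta^{-1}b_Y$ with $b_Y:=b_1Y+b_2(1-Y)$, hence $h'(0)[1]=\lim_{\zeta\to 0}\langle(\zeta A+b_Y)^{-1}\alpha,\alpha\rangle$. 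Here $b_Y$ is bounded, positive and boundedly invertible, so conjugating by $b_Y^{1/2}$ rewrites this as $\lim_{\zeta\to 0}\langle (I+\zeta B)^{-1}\gamma,\gamma\rangle$ with $\gamma=b_Y^{-1/2}\alpha$ and $B=b_Y^{-1/2}Ab_Y^{-1/2}$ self-adjoint; strong resolvent convergence then gives $h'(0)[1]=\norm{\gamma}^2=\langle b_Y^{-1}\alpha,\alpha\rangle$. In particular both derivatives exist and are nonnegative.

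Finally, since $0\le Y\le 1$ we have $b_Y\le\ni{b}\,I$ as operators, hence $b_Y^{-1}\ge\ni{b}^{-1}I$, so
\[
 h'(0)[1]=\langle b_Y^{-1}\alpha,\alpha\rangle\;\ge\;\ni{b}^{-1}\norm{\alpha}^2=\ni{b}^{-1}\,|h'(0)[b]| ,
\]
which rearranges to $|h'(0)[b]|\le\ni{b}\,h'(0)[1]$. The main obstacle is the passage from the abstract representation to the two derivative formulas: the operator limits must be justified in the correct sectorial approach regions and, crucially, without assuming that $\alpha$ or $b_Y^{-1}\alpha$ lies in the domain of $A$; the conjugation by $b_Y^{\pm1/2}$, which converts both limits into strong resolvent convergence for a single self-adjoint operator, is what makes this routine. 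A secondary point requiring care is the normalization step, namely that the hypothesis indeed forces $h(0)$ to be real.
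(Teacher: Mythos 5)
Your proof is correct and follows essentially the same route as the paper: transport the problem to a two-variable Pick function near infinity, invoke the Type I Nevanlinna representation of Agler, Tully-Doyle and Young, read off $h'(p)[b]=\|\alpha\|^2$ and $h'(p)[1]=\langle b_Y^{-1}\alpha,\alpha\rangle$ from the resolvent, and conclude from $b_Y\leq \ni{b}I$. You are somewhat more explicit than the paper about the operator limits and the normalization $h(p)=0$ (the paper implicitly assumes the latter, and your own argument that nonconstancy plus existence of $h'(p)[b]$ forces $h(p)$ real is not quite airtight---e.g.\ $h(z)=i+z_1+z_2$ is nonconstant with a finite directional derivative at $0$---but that caveat about the hypothesis applies equally to the paper's proof and you rightly flag it).
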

	\begin{proof}
		In this proof $\frac{1}{z} = (\frac{1}{z_1},\frac{1}{z_2}).$
		We will use the Nevanlinna representation in two variables
		to calculate the derivative at $1$.
		Define $\hat{h}(z_1,z_2) = h(b_1z_1+p_1,b_2z_2+p_2).$
		Note $\hat{h}'(0)[1] = h'(p)[b].$
		Note since $\hat{h}'(0)[1]$ exists,
		$\lim_{s \rightarrow 0}\frac{1}{is}\hat{h}(is)$ exists,
		and thus $h(-\frac{1}{z})$ has a two variable Nevanlinna representation
			$$\hat{h}(-\frac{1}{z})=<(A-z_Y)^{-1}\alpha,\alpha>.$$
		Rearranging, we get
			$$\hat{h}(z)=<\left(A+\left[\frac{1}{z}\right]_Y\right)^{-1}\alpha,\alpha>.$$
		So, let $a\in \rtwoplus$ toward computing the derivative,
			$$\frac{1}{is}\hat{h}(isa)=<\left(A+\left[\frac{1}{a}\right]_Y\right)^{-1}\alpha,\alpha>.$$
		Since $\frac{1}{a}_Y$ is positive, $[\frac{1}{a}]^{1/2}_Y$ exists.
			$$\frac{1}{is}\hat{h}(isa)=
			<\left(is\left[\frac{1}{a}\right]^{-1/2}_Y
			A
			\left[\frac{1}{a}\right]^{-1/2}_Y  +  
			1\right)^{-1}
			\left[\frac{1}{a}\right]^{-1/2}_Y\alpha,
			\left[\frac{1}{a}\right]^{-1/2}_Y\alpha>.$$
		Apply the spectral theorem to $\left[\frac{1}{a}\right]^{-1/2}_YA\left[\frac{1}{a}\right]^{-1/2}_Y$
		to obtain
			$$\frac{1}{is}\hat{h}(isa)=\int \frac{1}{isx+1} d\mu(x) = \int \frac{1-isx}{1+s^2x^2} d\mu(x),$$
		where the total variation of $\mu$ is $\|\left[\frac{1}{a}\right]^{-1/2}_Y\alpha\|^2.$
		Taking the limit $s\rightarrow 0,$ we acheive
			$\hat{h}'(0)[a] = \|\left[\frac{1}{a}\right]^{-1/2}_Y\alpha\|^2.$
		So $h'(p)[b_1a_1,b_2a_2] = \|\left[\frac{1}{a}\right]^{-1/2}_Y\alpha\|^2.$
		Thus,
		$h'(p)[b] = \|\alpha\|^2$ and
		$h'(p)[1,1] = \|\left[\frac{1}{b}\right]^{-1/2}_Y\alpha\|^2.$
		Note, $\ni{b} \left[\frac{1}{b}\right]^{-1}_Y \geq 1$
		Thus, $|h'(p)[b]| \leq h'(p)[1].$
	\end{proof}
	Now we can prove the full result using some geometry.
	\begin{proof}[Proof of Theorem \ref{juliapoint}]
		If $\x$ is a multiple of $1,$ there is nothing to prove. Suppose not.
		Define
			$$\vec{v}_1 = \frac{\ni{\frac{1}{\x}}\ni{x} -
			\ni{\frac{1}{\x}}\x}{\ni{\frac{1}{\x}}\ni{x}-1}, 
			\vec{v}_2 = \frac{\ni{\frac{1}{\x}}\x - 1}{\ni{\frac{1}{\x}}\ni{x}-1}.$$
		Note, $\vec{v}_1, \vec{v}_2 \in \rnplus$, $\vec{v}_1 + \vec{v}_2 = 1,$ and
		$\frac{1}{\ni{\frac{1}{\x}}}v_1 + \ni{\x}v_2= \x.$
		Thus $f(\omega_1,\omega_2)=h(p + \omega_1 \vec{v}_1+\omega_2 \vec{v}_2)$ is a Pick function
		of two variables and satisfies \ref{juliapoint2},
			$$|f'(0)[b]|\leq \ni{b}f'(0)[1].$$
		Note that $\frac{1}{\ni{\frac{1}{\x}}}\leq\ni{\x}.$ Thus,
			$$|h'(p)[\x]|\leq \ni{x}h'(p)[1].$$
	\end{proof}

	We now strengthen the domain to an entire line segment and obtain a stronger result, a result for
	higher order directional derivatives.
	\begin{thm} \label{julialine}
		Let $L$ be a line segment in $\rn$ with positive slope. Denote
		$p$ as the midpoint of $L$ and $\x$ as the difference between the right
		endpoint and the midpoint $b$. If $h\in\p(L)$, then
			$$|h^{(k)}(p)[\x]| \leq k!\ni{\x}h'(p)[1]$$
	\end{thm}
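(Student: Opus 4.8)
The plan is to reduce the line-segment statement to the one-variable Julia inequality of Lemma \ref{juliaone} by restricting $h$ to the line through $L$. First I would parametrize $L$ as $t \mapsto p + t\x$ for $t \in (-1,1)$ (after rescaling so that the endpoints correspond to $t = \pm 1$), and set $g(t) = h(p + t\x)$. Since $L$ has positive slope, for $z \in \Pi$ the point $p + z\x$ has all coordinates in $\Pi$, so $g$ maps $\Pi$ into $\overline{\Pi}$; moreover $h \in \p(L)$ guarantees $g$ is continuous and real-valued on $(-1,1)$. Subtracting the constant $g(0) = h(p)$, we obtain a one-variable Pick function fitting the hypotheses of Lemma \ref{juliaone} with $b = 1$.

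The next step is to extract the derivative bound. Lemma \ref{juliaone} gives that the Taylor coefficients $a_n$ of $g(z) - g(0) = \sum_{n=1}^\infty a_n z^n$ satisfy $|a_k| \le a_1$. By the chain rule, $g^{(k)}(0) = h^{(k)}(p)[\x]$ (the $k$-th directional derivative in the direction $\x$), so $a_k = h^{(k)}(p)[\x]/k!$ and in particular $a_1 = h'(p)[\x]$. Hence $|h^{(k)}(p)[\x]| \le k!\, |h'(p)[\x]|$. Finally I would invoke Theorem \ref{juliapoint} — which applies because $p \in \operatorname{int}(\text{dom } h) \supseteq L$ and $h'(p)[\x]$ exists — to bound $|h'(p)[\x]| \le \ni{\x} h'(p)[1]$, and chain the two inequalities to get $|h^{(k)}(p)[\x]| \le k!\,\ni{\x}\, h'(p)[1]$, which is exactly the claimed estimate.

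The main obstacle I anticipate is the bookkeeping needed to apply Lemma \ref{juliaone} cleanly: that lemma is stated for a function on $\Pi$ that continues to $(-b,b)$ with value zero at the origin, whereas here $h$ is only assumed to be in $\p(L)$, i.e.\ continuous on $\Pi^n \cup L$ but a priori only known to be analytic on $\Pi^n$. One must check that the restriction $g(z) = h(p+z\x)$ is genuinely analytic on all of $\Pi$ (not just on a cone), that $g$ is not identically real (else it is constant and the bound is trivial), and that the Taylor expansion of $g$ at $0$ — obtained from the Schwarz-reflected analytic continuation across $(-1,1)$ — has the radius of convergence needed to identify its coefficients with the directional derivatives of $h$. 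A secondary subtlety is the rescaling: if $\x$ is the half-length vector of $L$, then $g(z) = h(p + z\x)$ is already normalized so the endpoints sit at $z = \pm 1$, matching the $h(bz)$ normalization in Lemma \ref{juliaone} with $b=1$; one should state this identification explicitly so the coefficient bound $|a_k| \le a_1$ transfers without a spurious power of the segment length.

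I would also remark that the constant $k!$ is sharp in the one-variable model (e.g.\ $h(z) = -1/z$ type examples rescaled), so no improvement over Lemma \ref{juliaone} is available at this level of generality; the genuinely multivariable input enters only through Theorem \ref{juliapoint} in the last line, which is what upgrades $h'(p)[\x]$ to $\ni{\x} h'(p)[1]$.
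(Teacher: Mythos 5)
Your proposal is correct and follows essentially the same route as the paper: restrict $h$ to the segment via $w\mapsto h(p+w\x)-h(p)$, apply the one-variable coefficient bound of Lemma \ref{juliaone} to get $|a_k|\leq a_1$, identify $k!a_k$ with $h^{(k)}(p)[\x]$, and then invoke Theorem \ref{juliapoint} to replace $|h'(p)[\x]|$ by $\ni{\x}h'(p)[1]$. The extra care you take with the normalization of the segment and the analyticity of the restriction is exactly the bookkeeping the paper leaves implicit.
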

	\begin{proof}
		Suppose $L$, $p$ and $\x$ are as in the statement of the theorem.
		Consider the function $f(w) = h(w\x+p)-h(p).$
		By Lemma \ref{juliaone}, $f$ is analytic and its power series
			$$f(w)=\sum a_kw^k$$
		satisfies $|a_k| \leq a_1.$
		Note $k!a_k = h^{(k)}(p)[\x].$
		Thus, applying the above and Theorem \ref{juliapoint},
			$$|h^{(k)}(p)[\x]|\leq k!|h'(p)[\x]| \leq k!\ni{x}h'(p)[1].$$
	\end{proof}
	Now we can prove Theorem \ref{liouville}.
	\begin{proof}
		Suppose $f \in L(\mathbb{R}^n).$ By Theorem \ref{julialine},
			$$|f^{(k)}(0)[s\x]|\leq k!s\ni{x}f'(p)[1]$$
		for all $x \in \rnplus, s\geq 0.$
		So,
			$$|f^{(k)}(0)[\x]|\leq k!\frac{1}{s^{k-1}}\ni{x}f'(p)[1]$$
		Taking $s\rightarrow \infty,$
			$$|f^{(k)}(0)[\x]|\leq 0$$
		Thus $f$ is linear on $\rnplus,$ so by continuation,
		$f$ is linear.
	\end{proof}

	Finally, we shall obtain a local Julia inequality for points on the interior of a set. It is given in terms of the regulators.
	\begin{thm}\label{julianhood}
		Let $S\subset \rn.$ Let $p$ be on the interior of $S.$
		If $h\in \p(S),$ then
			$$|h^{(k)}(p)[\z]| \leq k!q^S_{k}(p)[\z]h'(p)[1].$$
	\end{thm}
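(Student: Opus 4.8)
The plan is to reduce the inequality, one totally positive direction at a time, to the line-segment estimate of Theorem \ref{julialine}, and then to recognize the family of estimates so obtained as precisely the system of constraints that defines the local regulator $q^S_k(p)$.

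First I would set up the objects. Since $p$ lies in the interior of $S$, the Schwarz-reflection/edge-of-the-wedge discussion of Section \ref{preliminaries} makes $h$ analytic on a neighborhood of $p$ in $\cn$, so that $h^{(k)}(p)[\z]$ --- the $k$-th derivative at $0$ of the one-variable function $t\mapsto h(p+t\z)$ --- is a genuine homogeneous polynomial of degree $k$ in $\z$ with real coefficients, and by Theorem \ref{juliapoint} the quantity $h'(p)[1]$ exists and is nonnegative. The degenerate case $h'(p)[1]=0$ is dispatched at once: Theorem \ref{juliapoint} then forces $h'(p)[\x]=0$ for all $\x\in\rnplus$, so the linear form $h'(p)$ vanishes on an open cone, hence identically, and Theorem \ref{julialine} propagates this to $h^{(k)}(p)[\x]=0$ for all totally positive $\x$, so the homogeneous polynomial $h^{(k)}(p)[\,\cdot\,]$ vanishes identically and both sides of the asserted inequality are zero. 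So I assume $h'(p)[1]>0$ and set $q(\vec w)=h^{(k)}(p)[\vec w]/(k!\,h'(p)[1])$, a homogeneous real polynomial of degree $k$.

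The heart of the argument is then direction by direction. Fix $\x\in\rnplus$ and let $\rho$ be the reach of $S$ at $p$ in the direction $\x$, i.e. $d^S(p)[\x]$. For every $0<r<\rho$ the segment $\{p+t\x:|t|\le r\}$ lies in $S$, so $h$ restricted to it lies in $\p$; this segment has positive slope, midpoint $p$, and difference $r\x$ between its right endpoint and midpoint, so Theorem \ref{julialine} gives $|h^{(k)}(p)[r\x]|\le k!\,\ni{r\x}\,h'(p)[1]$. Using the degree-$k$ homogeneity of $h^{(k)}(p)[\,\cdot\,]$ and letting $r\uparrow\rho$ converts this into $|q(\rho\x)|\le\rho\,\ni{\x}=\ni{\rho\x}$, which is exactly the admissibility condition appearing in the definition of $q^S_k(p)$ (with the boundary displacement $d^S(p)[\x]\,\x$ in the role of the point at which $q$ is tested). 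Since $\x$ ranged over all totally positive directions, $q$ is admissible in the supremum defining $q^S_k(p)[\z]$, hence $|q(\z)|\le q^S_k(p)[\z]$ for every $\z$; clearing the denominator yields $|h^{(k)}(p)[\z]|\le k!\,q^S_k(p)[\z]\,h'(p)[1]$.

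I expect the main obstacle to be the bookkeeping around the reach $d^S(p)[\x]$: one must check that for $r$ just below the reach the entire closed segment genuinely sits inside $S$ --- so that $h\in\p$ there and Theorem \ref{julialine} is legitimately applicable --- and that the limit $r\uparrow\rho$ meshes with the supremum in the definition of $d^S$, including the boundary subcases $\rho=+\infty$ (which forces $q$ to vanish in that direction once $k\ge 2$) and $\rho=0$ (which cannot occur since $p$ is interior to $S$). A secondary point is purely cosmetic: matching the normalization of the displayed estimate to the exact form written in the regulator definition, and noting that $|q(\z)|\le q^S_k(p)[\z]$ is being applied at complex $\z$, which is legitimate since $q$ and the regulator are real homogeneous polynomials evaluated at complex arguments.
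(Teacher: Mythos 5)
Your proposal is correct and follows essentially the same route as the paper: apply Theorem \ref{julialine} along positively sloped segments through $p$ in each totally positive direction to bound $h^{(k)}(p)$ by the reach $d^S(p)[\x]$, note that $h^{(k)}(p)[\,\cdot\,]$ is a homogeneous real polynomial since $h$ is analytic at $p$ by edge-of-the-wedge, and conclude from the definition of $q^S_k(p)$ as a supremum over admissible polynomials. Your extra care with the degenerate case $h'(p)[1]=0$ and the limit $r\uparrow d^S(p)[\x]$ only tightens details the paper leaves implicit.
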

	\begin{proof}
		By Theorem \ref{julialine},
			$$h^{(k)}(p)[\x] \leq k!d^S(p)[\x]\ni{x}h'(p)[1],$$
		for all $\x \in \rtwoplus$.
		Since $h$ is analytic at $p$ by the edge of the wedge theorem, $h^k(p)[\x]$ is a homogenous polynomial
		in the entries of $\x$. Therefore, by definition of $q^S_{k}$,
			$$|h^{(k)}(p)[\z]| \leq  k!q^S_{k}(p)[\z]h'(p)[1].$$
	\end{proof}
	
\section{Proof of the main result}\label{woesection}
	This section is devoted to proving our wedge-of-the-edge theorem. First we prove a pointwise version.
	\begin{lem}\label{wedgelem}
		Let $E$ contain a neighborhood of $p.$
		Then,
			$$\p(E)=\p(E \cup Q^E(p)).$$
	\end{lem}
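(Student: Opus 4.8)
The claim amounts to showing that restriction is a bijection $\p(E\cup Q^E(p))\to\p(E)$: a member of $\p$ is determined by its values on $\Pi^n$ (each point of $E\cup Q^E(p)$ is a limit from $\Pi^n$), so it suffices to prove that every $h\in\p(E)$ extends to a member of $\p(E\cup Q^E(p))$. Fix such an $h$. Since $E$ contains a neighborhood of $p$, the edge-of-the-wedge theorem shows $h$ is analytic in a neighborhood of $p$ in $\cn$; write its Taylor expansion at $p$ as $h(p+\z)=\sum_{k\ge0}\tfrac1{k!}h^{(k)}(p)[\z]$, where the degree-$k$ summand is a homogeneous polynomial in $\z$.

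The substance of the argument is the local Julia inequality at a point. Applying Theorem \ref{julianhood} with $S=E$ gives, for every $\z$,
$$\left|\tfrac1{k!}h^{(k)}(p)[\z]\right|\ \le\ q^E_k(p)[\z]\,h'(p)[1],$$
where $h'(p)[1]$ is a fixed finite constant (if $h'(p)[1]=0$ then $h$ is constant near $p$, hence on all of $\Pi^n$, and the lemma is trivial; so assume $h'(p)[1]>0$). Because $\sqrt[k]{h'(p)[1]}\to1$, the root test shows $\sum_k\tfrac1{k!}h^{(k)}(p)[\z]$ converges absolutely whenever $\limsup_k\sqrt[k]{q^E_k(p)[\z]}<1$; since the summands are homogeneous polynomials and $\hat Q^E(p)$ is an open balanced set about $p$ (immediate from $q^E_k(p)[\lambda\z]=|\lambda|^k q^E_k(p)[\z]$), a routine Cauchy--Hadamard estimate promotes this to locally uniform convergence on $\hat Q^E(p)$. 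Thus the series defines a function $\tilde h$ analytic on $\hat Q^E(p)$. As $E$ contains a neighborhood of $p$, so does $\hat Q^E(p)$ (directly from the definition of the regulators), and on it $\tilde h$ coincides with $h$ by Taylor's theorem; fix such a neighborhood $B\subseteq\hat Q^E(p)$ of $p$, so $\tilde h=h$ on $B$.

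It remains to glue $h$ and $\tilde h$ into an element of $\p(E\cup Q^E(p))$; I expect this point-set bookkeeping to be the step demanding the most care, though it is routine. The relevant facts are that $\hat Q^E(p)$ is open and star-shaped about $p$ and that $\tilde h=h$ on $B$. For $w\in\Pi^n\cap\hat Q^E(p)$ the segment $[p,w]$ lies in $\hat Q^E(p)$ while $[p,w]\setminus\{p\}\subseteq\Pi^n$ (as $\operatorname{Im}(p+t(w-p))=t\operatorname{Im}w$) and meets $B\cap\Pi^n$ near $p$; hence $\Pi^n\cap\hat Q^E(p)$ is connected, so the analytic functions $h$ and $\tilde h$, which agree on $B$, agree on all of $\Pi^n\cap\hat Q^E(p)$. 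Approaching a real point $x\in Q^E(p)$ from within $\Pi^n$ along $x+i\varepsilon(1,\dots,1)\in\Pi^n\cap\hat Q^E(p)$, and using continuity of $h$ on $\Pi^n\cup E$ and of $\tilde h$ on $\hat Q^E(p)$, gives $\tilde h(x)=h(x)$ whenever $x\in E\cap Q^E(p)$; therefore
$$G:=h\ \text{on}\ \Pi^n\cup E,\qquad G:=\tilde h\ \text{on}\ Q^E(p)$$
is well defined, and the same approach-from-$\Pi^n$ device (every point of $Q^E(p)$ lying in the open set $\hat Q^E(p)$) shows $G$ is continuous on $\Pi^n\cup E\cup Q^E(p)$, analytic on $\Pi^n$, and $\overline\Pi$-valued. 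Hence $G\in\p(E\cup Q^E(p))$ with $G|_{\Pi^n\cup E}=h$, which completes the proof.
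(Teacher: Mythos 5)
Your proposal follows essentially the same route as the paper's proof: apply the local Julia inequality of Theorem \ref{julianhood} to bound the Taylor coefficients of $h$ at $p$, use the $k$-th root test together with the definition of $\hat{Q}^E(p)$ to sum the series there, and identify the resulting analytic function with $h$ by a uniqueness argument. Your version merely spells out in more detail the identification and gluing steps (connectedness of $\Pi^n\cap\hat{Q}^E(p)$, boundary limits, membership in the Pick class) that the paper treats tersely, so it is the same argument.
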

	\begin{proof}
		Consider the function,
			$$H(p+\z)=\sum^{\infty}_{k=0} \frac{h^{(k)}(p)[\z]}{k!}.$$
		Applying the $k$-th root to the bound given via Theorem \ref{julianhood},
		we get that this sum converges on $\hat{Q}^E(p)$ via its defintion.
		Note $H$ agrees with $h$ on the interior of $i\{p + s\x|\x\in\rnplus,0 < s < d^S(p[\x])\},$
		and $\hat{Q}^E(p)$ contains a neighborhood of $p$ by Proposition \ref{qnhood} (since any open set contains a wedge.)
		Open subsets of $i\rn$ are sets of uniqueness for analytic functions on $\Pi^n$.
		Thus, $H$ is an analytic continuation of $h.$
		So, $$\p(E)=\p(E \cup Q^E(p)).$$
	\end{proof}
	Now, we can prove the main result in terms of the Pick class.
	\begin{thm}[A wedge-of-the-edge theorem]
		Let $S\subset \rn.$
			$$\p(S)=\p(Q^S).$$
	\end{thm}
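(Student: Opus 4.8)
The plan is to globalize the pointwise statement Lemma~\ref{wedgelem} by the maximal-continuation device from the proof of Proposition~\ref{maxcontexist}, showing that $Q^S$ is contained in the domain of the maximal Pick continuation of an arbitrary $h\in\mathcal{P}(S)$.

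Because $S\subseteq Q^S$, restriction identifies $\mathcal{P}(Q^S)$ with the set of functions in $\mathcal{P}(S)$ that continue to $Q^S$, so the inclusion $\mathcal{P}(Q^S)\subseteq\mathcal{P}(S)$ is free and the whole content is the reverse inclusion: every $h\in\mathcal{P}(S)$ continues to all of $Q^S$. Fix such an $h$. Repeating the proof of Proposition~\ref{maxcontexist} with ``Pick'' in place of ``locally operator monotone,'' let $\hat S$ be the union of $S$ with every open $V\subseteq\mathbb{R}^n$ to which $h$ continues; competing continuations agree on overlaps, since on each such $V$ the value at a point $x$ must be the boundary limit of $h(z)$ as $z\to x$ within $\Pi^n$. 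Hence $h$ continues to a well-defined $\hat h\in\mathcal{P}(\hat S)$, and $\hat S$ is maximal with this property.

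Next I claim $\hat S$ is one of the sets of which $Q^S$ is by definition the smallest, namely $S\subseteq\hat S$ and $\hat S$ is closed under local regulation: $Q^{\hat S}(p)\subseteq\hat S$ for every $p\in\mathrm{int}\,\hat S$. Indeed such a $p$ has a neighborhood contained in $\hat S$, so Lemma~\ref{wedgelem} applies with $E=\hat S$ and gives $\mathcal{P}(\hat S)=\mathcal{P}(\hat S\cup Q^{\hat S}(p))$; thus $\hat h$ continues to $\hat S\cup Q^{\hat S}(p)$, and maximality of $\hat S$ forces $Q^{\hat S}(p)\subseteq\hat S$. By minimality of $Q^S$ we get $Q^S\subseteq\hat S$, so $\hat h$ restricts to a member of $\mathcal{P}(Q^S)$ that agrees with $h$ on $S$; this proves $\mathcal{P}(S)\subseteq\mathcal{P}(Q^S)$, and together with the trivial inclusion, $\mathcal{P}(S)=\mathcal{P}(Q^S)$.

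The analytic work is confined to Lemma~\ref{wedgelem}, whose statement already guarantees the produced continuation has values in $\overline{\Pi}$ along the real axis, so no extra checking is needed. The step I expect to be the real obstacle is the set-theoretic one: that ``the minimal set $Q^S$'' genuinely exists, so that minimality means ``contained in every competitor'' as the argument above requires. This is not automatic, because the operator $T\mapsto T\cup\bigcup_{p\in\mathrm{int}\,T}Q^T(p)$ is not monotone in $T$ --- enlarging $T$ shrinks each local regulated set $Q^T(p)$ while enlarging $\mathrm{int}\,T$. I would secure it either by verifying directly that the intersection of all sets containing $S$ and closed under local regulation is again closed under local regulation, or by a transfinite iteration $S_0=S$, $S_{\alpha+1}=S_\alpha\cup\bigcup_{p\in\mathrm{int}\,S_\alpha}Q^{S_\alpha}(p)$, $S_\lambda=\bigcup_{\alpha<\lambda}S_\alpha$, that stabilizes at $Q^S$; the argument uses only that $Q^S$ exists, contains $S$, and sits inside every set that contains $S$ and is closed under local regulation.
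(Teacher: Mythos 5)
Your proposal is correct and follows essentially the same route as the paper: globalize Lemma \ref{wedgelem} by passing to a maximal continuation domain and then invoke minimality of $Q^S$ to conclude it is contained in that domain. Your per-function construction of $\hat S$ and your flag about the existence of the minimal set $Q^S$ are more careful than the paper's one-line version (which works with a single maximal $F$ satisfying $\p(S)=\p(F)$ and takes the minimality of $Q^S$ for granted), but the underlying argument is the same.
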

	\begin{proof}
		By Theorem \ref{wedgelem} there is a set containing $S$, the maximal $F$ such that $\p(S)=\p(F)$ satisfies $F \supset F\cup Q^F(p)$ for every $p$ in the interior of $F.$
		Note that by definition, $Q^S$ is the minimal such set and is thus contained in $F$.
	\end{proof}
	\begin{cor}
		Let $S\subset \rn.$
			$$\L(S)=\L(Q^S).$$
	\end{cor}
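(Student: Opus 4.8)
The plan is to read off the Löwner-class statement directly from the Pick-class wedge-of-the-edge theorem just proved, exploiting the fact that membership in $\L(E)$ is, apart from continuity up to the boundary set $E$, a condition localized on $\Pi^n$: namely the existence of positive semidefinite kernels $A^1,\dots,A^n$ with $f(z)-\overline{f(w)}=\sum_{i=1}^n (z_i-w_i)A^i(z,w)$ for $z,w\in\Pi^n$. Since $S\subseteq Q^S$, the two inclusions defining the desired equality will be handled respectively by restricting the boundary set and by enlarging it.

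For $\L(Q^S)\subseteq\L(S)$: if $F\in\L(Q^S)$, then $F$ restricted to $\Pi^n\cup S$ is still analytic on $\Pi^n$ and continuous on $\Pi^n\cup S$, and the same kernels $A^i$ continue to witness the Löwner decomposition on $\Pi^n$; hence $F|_{\Pi^n\cup S}\in\L(S)$. This restriction map is injective because an element of $\L(Q^S)$ is determined by its values on $\Pi^n$.

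For $\L(S)\subseteq\L(Q^S)$: given $f\in\L(S)$, in particular $f\in\p(S)$, since $\L(S)\subseteq\p(S)$. By the wedge-of-the-edge theorem for the Pick class, $f$ extends to some $F\in\p(Q^S)$; and because the extension furnished through Lemma \ref{wedgelem} is obtained by analytic continuation from $\Pi^n$, we have $F|_{\Pi^n}=f|_{\Pi^n}$. Thus the kernels $A^i$ associated to $f$ satisfy $F(z)-\overline{F(w)}=\sum_{i=1}^n (z_i-w_i)A^i(z,w)$ for all $z,w\in\Pi^n$, and since also $F\in\p(Q^S)$, we conclude $F\in\L(Q^S)$. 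Combining the two inclusions yields $\L(S)=\L(Q^S)$; together with Theorem \ref{amy11c} this also recovers Theorem \ref{mainwoe} for open domains.

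The part requiring care is not a genuine obstacle but the bookkeeping around identifying a function with its analytic continuation: one must confirm that the Pick-class extension leaves the function unchanged on $\Pi^n$ (so that the kernel identity transports verbatim) and that restriction of the boundary set is injective on these classes. Both follow immediately from analyticity on $\Pi^n$ together with the construction inside the proof of Lemma \ref{wedgelem}; no new estimate or representation theorem is needed, as all the analytic content is already packaged in the Pick-class theorem.
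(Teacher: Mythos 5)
Your proposal is correct and takes the approach the paper implicitly intends: the paper states this corollary with no proof, treating it as immediate from the Pick-class wedge-of-the-edge theorem, and your argument supplies exactly the obvious details — the Löwner decomposition is a condition on the values over $\Pi^n$ alone, and the Pick-class extension in Lemma \ref{wedgelem} is an analytic continuation, hence leaves those values fixed, so the kernel identity transports verbatim in both directions.
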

	This implies the main result Theorem \ref{mainwoe} via \ref{amy11c}.

\bibliography{references}
\end{document}